\declaretheoremstyle[headfont=\normalfont]{normalhead}
\renewcommand*{\backref}[1]{}
\renewcommand*{\backrefalt}[4]{%
    \ifcase #1 (Not cited.)%
    \or        (Cited on page~#2.)%
    \else      (Cited on pages~#2.)%
    \fi}
\newcommand{\U}{\mathbf{U}}
\newcommand{\UU}{\operatorname{U}}
\renewcommand{\u}{\operatorname{U}}
\newcommand\C{\mathbb{C}}
\newcommand\GL{\mathrm{GL}}
\newcommand\calO{\mathcal{O}}
\newcommand\frakp{\mathfrak{p}}
\newcommand{\Sh}{\mathrm{Sh}}
\newcommand{\SO}{\operatorname{SO}}
\newcommand{\Sp}{\operatorname{Sp}}
\newcommand{\cind}{\mathrm{cInd}}
\renewcommand{\bar}[1]{\sigma(#1)}
\newcommand{\vol}{\mathrm{vol}}
\newcommand{\volx}{\mathrm{vol}^\times}
\newcommand{\tauF}{\tau|_{F^\times}}
\newtheorem{thm}{Theorem}[section]%theorem enviorment
\newtheorem{lem}[thm]{Lemma}
\newtheorem{prop}[thm]{Proposition}
\newcommand{\pmat}[4]{
\begin{pmatrix}
#1 & #2\\
#3 & #4
\end{pmatrix}
}%Quick access for 2x2 matrices
\title{On the Rankin-Selberg gamma factor of simple supercuspidal representations of the unitary group for $p$-adic local fields}
\author{Philip Barron, Yu Xin}
\date{}
\begin{document}
\maketitle
\begin{abstract}
Let $\pi$ be a simple supercuspidal representation of the quasi-split unramified even unitary group with respect to an unramified quadratic extension $E/F$ of $p$-adic fields. We compute the Rankin-Selberg gamma factor for rank-$1$ twists of $\pi$ by a tamely ramified character of $E^\times$. For non-dyadic cases, the gamma factor can also be recovered by considering the endoscopic lift of the representation. For the dyadic case, the result is original and we expect to extend the results on the Langlands parameter of the simple supercuspidal representations to the dyadic case with our computation.
\end{abstract}

\tableofcontents

% %Added funding here
% \renewcommand\thefootnote{}
% \footnotetext{Barron was supported by a Zuckerman Post-doctoral fellowship at Bar-Ilan University and by the ISRAEL SCIENCE FOUNDATION (grant No. 376/21).}
% \footnotetext{Xin was supported by}
% %\addtocounter{footnote}{-1}
% \renewcommand\thefootnote{\arabic{footnote}}

\section{Introduction}
Let $F$ be a local non-archimedean field of characteristic $0$ and of residue characteristic $p>0$. Let $l$ be a positive integer and $\u_{2l}$ be the quasi-split unitary group of rank $l$ with respect to an unramified quadratic extension $E/F$.
%and $\u_{2l} = \U_{2l}(F)$ be its $F$-points.
The simple supercuspidal representations of reductive groups are the class of supercuspidal representations of minimal positive depth. Such representations for arbitrary reductive groups were developed and studied in \cite{GR10} and generalized to a class of supercuspidal representations called the epipelagic representations in \cite{RY14}. Simple supercupsidal representations are important towards obtaining results on arbitrary supercuspidal representations. In this work we compute the Rankin-Selberg gamma factor for simple supercuspidal representations of the unitary group $\u_{2l}(F)$.

The gamma factor $\gamma(s,\pi\times \tau, \Upsilon, \psi)$ was defined by Morimoto \cite{Mor23} by extending and elaborating on the Rankin-Selberg integrals of Ben-Artzi and Soudry \cite{BS,BS2}, where $\psi$ is a nontrivial character of $E$ which is invariant under the action of the Galois group $\mathrm{Gal}(E/F)$ of the field extension $E/F$, $\pi$ is a generic representation of $\u_{2l}(F)$, $\tau$ is a tamely ramified character of $E^\times$ and $\Upsilon$ is a character of $E^\times$ extending the quadratic character $\omega_{E/F}$ associated with the field extension $E/F$ in the class field theory.

We fix a uniformizer $\varpi \in F$ and a level one additive character $\psi_F$ of $F$. The simple supercuspidal representation $\pi$ is parameterized by two parameters: a character $\omega^1$ of $k_E^1$ and an element $b \in \calO_F^\times$ modulo $1+\frakp_F$, where $k_E^1$ is the multiplicative group of norm one elements in the residue field of $E$. See section \ref{SSC} for more details. Our main theorem is the following.
\begin{thm}[Theorem \ref{main1}]
Let $\pi = {\pi_{\omega^1,b}}$ be a simple supercuspidal representation and let $\tau$ be a tamely ramified characters of $E^\times$. Let $\Upsilon$ be a tamely ramified character extending $\omega_{E/F}$. Then 
\[
\gamma(s, \pi \times \tau, \Upsilon, \psi) = \omega^1(-1) \tau(-1)^{l} q_F^{-2s+1} \tau(b^{-1}\varpi).
\]
Equivalently,
\[
\gamma^{\text{Sh}}(s, \pi \times \tau, \psi) = -\omega^1(-1) \tau(-1)^{l} q_F^{-2s+1} \tau(b^{-1}\varpi).
\]
Here $\gamma^{\Sh}$ is the gamma factor defined by Shahidi \cite{Sh3}.
\end{thm}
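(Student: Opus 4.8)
The plan is to compute the gamma factor directly from its definition via the local functional equation for the Rankin–Selberg integrals of Morimoto, evaluated on a carefully chosen test vector in the Whittaker model of the simple supercuspidal representation $\pi_{\omega^1,b}$. The gamma factor $\gamma(s,\pi\times\tau,\Upsilon,\psi)$ is characterized by an identity of the shape
\[
\Psi(\text{intertwined section}, W, \ldots) = \gamma(s,\pi\times\tau,\Upsilon,\psi)\,\Psi(\text{original section}, W, \ldots),
\]
so once I can evaluate both sides on a single well-chosen $W$ and a single well-chosen section, the gamma factor drops out as a ratio. The standard strategy (following, e.g., the analogous computations for $\mathrm{GL}_n$, $\mathrm{SO}_{2n+1}$, etc.) is to use the new/minimal vector $W_0$ in the Whittaker model fixed by the relevant pro-$p$ Iwahori-type subgroup, so that the integrals collapse to a very short sum — ideally a single term, or a short geometric-type sum over cosets — because the support of $W_0$ on the torus is essentially a single coset modulo the subgroup that fixes $W_0$.

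**Key steps, in order.** First I would recall the explicit description of $\pi=\pi_{\omega^1,b}$ from Section \ref{SSC}: it is compactly induced from an affine generic character $\chi$ of the pro-$p$ Iwahori subgroup (or the appropriate normalizer), and its Whittaker model contains a distinguished vector $W_0$ supported on $\u_{2l}(F)$-translates that meet the support of $\chi$. Second, I would write down Morimoto's Rankin–Selberg integral $\Psi(s, W, f_{s,\tau,\Upsilon})$ explicitly — the integral over a unipotent-quotient times a torus-type variable of $W$ against the section $f_{s,\tau,\Upsilon}$ induced from the $\mathrm{GL}_1(E)$-twist — and its dual $\widetilde\Psi$ with the intertwining operator $M(s)$ applied. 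Third, I would substitute $W=W_0$: by the support and equivariance properties of $W_0$ (it transforms under $\omega^1$ and is supported near the identity modulo the pro-$p$ Iwahori), the $\Psi$-integral reduces to a finite sum over Weyl-group/affine-Weyl representatives, and only the representatives compatible with the affine generic character contribute. This is where the factors $\omega^1(-1)$, $\tau(-1)^l$ and $\tau(b^{-1}\varpi)$ will emerge: $\omega^1(-1)$ from the central/transformation character of $W_0$, $\tau(-1)^l$ from the Galois/sign bookkeeping on the $l$-dimensional piece of the torus, and $\tau(b^{-1}\varpi)$ from evaluating $\tau$ on the torus element (with $\varpi$-power from the affine root and $b$ from the second parameter of $\pi$) that carries the unique surviving coset. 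Fourth, I would compute $\widetilde\Psi(1-s,\ldots)$ similarly; the intertwining operator $M(s)$ acting on a tamely ramified section contributes the $q_F^{-2s+1}$ (a local $\tau$-$L$-factor ratio for the rank-$1$ induced representation, which for a ramified $\tau$ degenerates to a monomial in $q_F^{-s}$). Finally, I would take the ratio and read off $\gamma(s,\pi\times\tau,\Upsilon,\psi)=\omega^1(-1)\tau(-1)^l q_F^{-2s+1}\tau(b^{-1}\varpi)$, and then translate to Shahidi's normalization using the known comparison $\gamma^{\Sh} = -\gamma$ (the sign $-1$ being the discrepancy between the two normalizations for even unitary groups), giving the second displayed formula.

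**Main obstacle.** The hard part will be the bookkeeping in Step 3: identifying exactly which coset representatives in the (affine) Weyl double-coset decomposition of the integration domain survive the pairing of the affine generic character defining $\pi$ against the Whittaker functional, and pinning down the resulting normalizing constants and sign without losing track of the Galois twist built into the unitary-group Whittaker datum and into $\Upsilon$. Morimoto's integral is set up over the quasi-split unitary group with a nontrivial Galois action, so the "single surviving term" heuristic from the $\mathrm{GL}_n$ case must be re-derived here, keeping careful track of how $\psi$ (Galois-invariant), $\Upsilon|_{F^\times}=\omega_{E/F}$, and $\tauF$ interact on the torus. A secondary technical point is the dyadic case ($p=2$): the pro-$p$ structure and the parametrization of simple supercuspidals still go through, but several of the character computations (e.g. evaluating $\psi_F$ on $2$-torsion, and the behavior of $\tau(-1)$) need to be checked separately, and I would isolate those computations in a lemma so that the main argument is uniform. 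I expect the non-dyadic consistency check against the endoscopic-lift computation (mentioned in the abstract) to serve as a useful sanity check on the constants.
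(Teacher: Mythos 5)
Your overall strategy matches the paper's: fix the distinguished Whittaker vector $W_0$ coming from the compact induction, fix a section $f_s$ supported on the Iwahori cell $T N_1 I^{++}$ and the Schwartz function $\phi=\mathbf{1}_{\mathfrak{p}_E}$, compute both $\mathcal{L}(W,f_s,\phi)$ and $\mathcal{L}(W,M^*(\tau,s)f_s,\phi)$, and read off $\gamma$ as a ratio. The support-collapse you anticipate in Step 3 is exactly Lemma \ref{entriesinp} (identifying which $(r,x,b)$ put $^{\iota w_{l-1,1}}(rxb)$ into $N_l Z I^+$), and the intertwining-operator evaluation is Lemmas \ref{lem:Mtausfsb1}--\ref{lem:int.act}.

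However, your Step 3 misattributes the factor $\omega^1(-1)\tau(-1)^l$. It does not emerge from the Whittaker function, the Weyl-group bookkeeping, or any Galois sign accounting in the integral. It is \emph{definitional}: Morimoto's normalized gamma factor is $\gamma(s,\tau,\Upsilon,\psi)=\omega_\pi(-1)\tau(-1)^l\gamma_0(s,\tau,\Upsilon,\psi)$ (see \eqref{defn:norgam}), and the raw ratio $\gamma_0 = \mathcal{L}^*/\mathcal{L}$ is simply $q_F^{-2s+1}\tau(b^{-1}\varpi)$. If you carry out your plan and compute the ratio directly, you will be missing precisely this factor until you incorporate the normalization convention. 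Two further points you should not gloss over. First, the $q_F^{-2s+1}\tau(b^{-1}\varpi)$ factor is not a single ``$L$-factor ratio'' from $M(\tau,s)$; it is the product of the normalizing constant $C(s,\tau,\psi)=\tau(-1)\gamma(2s-1,\tau|_{F^\times},\psi_F)$ from \eqref{localcoefficient} (which needs its own calculation, done in the paper via Tate's thesis) with a Gauss-sum integral over $\varpi\mathcal{O}_F^\times$ coming from the $\psi_F(c\varpi^{-1})$ twist, and the intermediate formulas differ between the cases $\tau|_{F^\times}$ ramified versus unramified (an $L$-function appears and cancels in the unramified case). Second, $\gamma^{\Sh}=-\gamma$ is not a universal sign discrepancy for even unitary groups: the paper derives it from Corollary 1.1 in \cite{Mor23}, which gives $\gamma^{\Sh}(s,\pi\times\tau,\psi)=\gamma(s,\pi\times(\tau\otimes\Upsilon^{-1}),\Upsilon,\psi)$, and the $-1$ appears only after substituting the explicit formula, via $\Upsilon^{-1}(b^{-1}\varpi)=-1$ and $\Upsilon(-1)^l=1$.
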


In particular, for non-dyadic case the rank-$1$ twisted gamma factor of a simple supercuspidal representation can be recovered by considering its endoscopic lift (cf. \cite{AL14,Mok15,Oi18,KK05}). For the dyadic case, the construction of the endoscopic lift in \cite{Oi18} is invalid and our result is original.
Hence one possible application of this result is to the explicit description of the Landlands parameter $\varphi_\pi$ of $\pi$. Recall that by the work of Asgari, Cogdell and Shahidi \cite{ACS} Theorem 3.2, there is a unique irreducible admissible generic representation $\Pi$ of $\mathrm{GL}_{2l}(E)$ which is a functorial lift of $\pi$, and in particular $\gamma^{\Sh}(s,\pi \times \tau, \psi) = \gamma(s,\Pi \times \tau,\psi)$. 
In particular, our theorem proves that there are no tamely ramified characters in the cuspidal support of $\Pi$, which is in line with the expectation that $\Pi$ is simple supercuspidal.

Similar results were obtained for the groups $\SO_{2n+1}$ by Adrian \cite{Ad15} and for $\Sp_{2n}$ and $\SO_{2n+1}$ by Adrian and Kaplan \cite{AK19,AK21}. The present case is similar to the odd orthogonal case in that the gamma factor has no poles, as opposed to the cases of the other groups. See also the recent work of Adrian, Henniart, Kaplan and Oi \cite{AHKO} where the Langlands parameter of a simple supercuspidal representation of $\SO_{2n}$ in the dyadic case was found using, among other important results, a similar computation of the gamma factor.

\subsection*{Acknowledgement}
The authors express their deep gratitude to Eyal Kaplan for suggesting this project and for his dedicating help. The authors also want to express their gratitude to Moshe Adrian, Baiying Liu, Kazuki Morimoto and Masao Oi for answering related questions.

Xin was supported by the ISRAEL SCIENCE FOUNDATION (grant No. 376/21) as the fellow’s post-doctoral fellowship at Bar-Ilan University in the department of mathematics. Barron was supported by a Zuckerman Post-doctoral fellowship at Bar-Ilan University and by the ISRAEL SCIENCE FOUNDATION (grant No. 376/21).

\section{Notations and Preliminaries}
\subsection{Notations}
Let $F$ be a $p$-adic field. We denote by $\mathcal{O}_F$ and $\mathfrak{p}_F$ for the ring of integers of $F$ and the maximal ideal, respectively. We fix a uniformizer $\varpi$, and we normalize the absolute values $|\cdot|_F$ on the local fields as
\(
|\varpi|_F = q_F
\)
where $q_F$ is the size of the residue field $k_F$ of $F$, so that for any Haar measure $dx$ on $F$ we have
\(
d(ax) = |a|_F dx.
\)
Let $E$ be an unramified quadratic extension of $F$. We apply the above notations to $E$ as well and note that 
\begin{itemize}
 \item $\varpi$ is also a uniformizer for $E$.
 \item $q_E = q_F^2$.
\end{itemize}

Let $l$ be a positive integer and let $\U_{2l}$ be the unitary group on $2l$ variables, defined by
\[
\U_{2l}(F) = \U_{E/F,2l}(F)=\{g \in \GL_{2l}(E) : {^t\bar{g}} J_{2l} g  = J_{2l}\},
\]
where
\[
J_{2l} = \begin{pmatrix} & w_l\\ -w_l & \end{pmatrix}
\text { and }
w_l = \begin{pmatrix}
    &&1\\
    &\iddots&\\
    1&&
\end{pmatrix}.
\]
Let $\u_{2l} = \U_{2l}(F)$. Fix the Borel subgroup $B_l = T_l \ltimes N_l$ of upper triangular matrix in $\u_{2l}$, where $T_l$ is the maximal torus and $N_l$ is the unipotent radical. 
For $1 \leq k \leq l$, let $Q_k < \u_{2l}$ be the standard maximal parabolic subgroup whose Levi part is isomorphic to $\GL_k(E) \times \u_{2(l-k)}$, and $\delta_{Q_k}$ be the modular character of $Q_k$. In particular $Q_l$ is the Siegel parabolic subgroups of $\u_{2l}$.
We denote $a^* = w_l {^t\bar{a}^{-1}} w_l$ and $^x y = x^{-1}y x$ for $a \in \GL_k(E)$ and $x,y \in \u_{2k}$.

\subsection{The Rankin-Selberg gamma factor}
In this section, we will introduce necessary notations for the rank-$1$ twisted Rankin-Selberg gamma factor for the unramified quasi-split unitary group. For the general case with a rank-$k$ twist, see \cite{Mor23}. 

We fix an additive character $\psi_F$ of $F$, and let $\psi = \psi_F \circ \mathrm{tr}_{E/F}$, where $\mathrm{tr}_{E/F}$ is the trace map of the field extension $E/F$. 
In general, a generic character on $N_l$ is of the form $$\psi_{{N_l},\alpha} = \psi_E\left(\sum_{i=1}^{l-1} n_{i,i+1} - \frac{\alpha}{2} n_{l,l+1}\right)$$
where $\alpha \in F^\times$. The theory of the Rankin-Selberg gamma factor of the unitary group were developed in \cite{Mor23}. Our work will be based on the theory for the generic character $\psi_{N_l} := \psi_{N_l,1}$, which was developed in \textit{loc. cit.}.
%and \footnote{By the time we started the project, the reference only provided the case when $\alpha = 1$. This exposes an impact on the way we choose the Whittaker function $W$ in section \ref{comp}.}. 

Let $\Upsilon$ be a character of $E^\times$ satisfying $\Upsilon|_{F^\times} = \omega_{E/F}$, where $\omega_{E/F}$ is 
the quadratic character associated with the quadratic extension $E/F$ in the class field theory.
%defined by the following exact sequence:
%\[1 \rightarrow E^\times \overset{N_{E/F}}{\longrightarrow} F^\times \overset{\omega_{E/F}}\longrightarrow \{\pm 1\} \rightarrow 1.\]
The Weil representation $\omega_{\psi,\Upsilon}$ is a group action of $\u_{2k} \ltimes H_k$ on the space $\mathcal{S}(E^k)$
of Bruhat-Schwartz functions, where $H_k$ is the so called Heisenberg group. In particular, the action of $\u_{2k}$ when $k=1$ is given by
\begin{align}
    %\omega_{\psi,\Upsilon}((x,0;0))\phi(\xi) &= \phi(\xi + x),\\%1
    %\omega_{\psi,\Upsilon}((0,y;0))\phi(\xi) &= \psi(\xi w_k ^t\bar{y}) \phi(\xi),\\%2
    %\omega_{\psi,\Upsilon}((0,0;t))\phi(\xi) &= \psi\left(\frac{t}{2}\right) \phi(\xi),\\%3
    \omega_{\psi,\Upsilon}\left(
    \begin{pmatrix}
        a & \\ & a^*
    \end{pmatrix}\right)\phi(\xi) &= \Upsilon(a)|a|_E^{\frac{1}{2}}\phi(\xi a),\label{weil:diag}\\%4
    \omega_{\psi,\Upsilon}\left(
    \begin{pmatrix}
        1 & x \\ & 1
    \end{pmatrix}\right)\phi(\xi) &= \psi(\frac{1}{2}(\xi \bar{\xi x})\phi(\xi),\label{weil:uni}\\%5
    \omega_{\psi,\Upsilon}\left(\pmat{}{1}{-1}{}\right)\phi(\xi) = \hat{\phi}(\xi)&:=\int_{E}\phi(x)\psi(\bar{x}\xi) dx \label{weil:weyl},%6
\end{align}
where $a^* = {^t\sigma(a)}$ and
where $dx$, in defining the Fourier transform, is the self-dual Haar measure with respect to $\psi$. %The choice of this Haar measure guarantees that $\hat{\hat{\phi}}(x) = \phi(-x)$ for all $x \in E$ and all $\phi \in \mathcal{S}(E)$.

Let $\pi$ be a generic representation of $\u_{2l}$ and $\tau$ a character of $E^\times$. Let $\tau^*$ be the character defined by $\tau^*(a) = \tau(a^*)$. We realize $\pi$ in its Whittaker model as $\mathcal{W}(\pi,\psi_{N_l}^{-1})$. For every $s \in \C$, we consider the parabolic induction of $\tau$ to $\u_2$ and realize it as the space $V(\tau,s)$ of smooth, complex-valued functions $f_s(-,-)$ on $\u_{2}\times \GL_1(E)$ such that
\begin{align*}
f_s\left(\pmat{m}{}{}{m^*}\pmat{1}{u}{}{1}g,a\right) &= \delta_{Q_1}\left(\pmat{m}{}{}{m^*}\right)|m|_E^{s-1/2}f_s(g,am) \\&= |m|_E^{s}f_s(g,am),
\end{align*}
and $f_s(g,a) = \tau(a)$ for all $g \in \u_2$ and $a \in E^\times$.% \footnote{In the rank one case, the space $V(\tau,s)$ is independent of the choice of the additive character $\psi$.}

Let 
\(
W \in \mathcal{W}(\pi,\psi_{N_l}^{-1}),~
f_s \in V(\tau,s)\text{ and }
\phi \in \mathcal{S}(E).
\)
The Rankin-Selberg integral for $\pi \times \tau$ is defined by
\[ \mathcal{L}(W,f_s,\phi) =
\int_{N_1 \backslash \u_2} \int_{R^{l,1}}\int_{X_1} W( ^{w_{l-1,1}} (rxg))f_s(g) [\omega_{\psi^{-1},\Upsilon^{-1}}(g) \phi](x) dx dr dg
\]
when $l > 1$. Here
\begin{align*}
R^{l,1} &= \left\{\begin{pmatrix}
I_{l-2} & 0 & r & & &\\
 & 1 & & & &\\
 &&1&&&\\
 &&&1&&r^\prime\\
 &&&&1&0\\
 &&&&&I_{l-2}
\end{pmatrix} \in \U_{2l}
\right\},\\
X_1 &=\left\{ \begin{pmatrix}
1 & x & 0 & 0\\
0 & 1 & 0 & 0\\
0 & 0 & 1 & x^\prime\\
0 & 0 & 0 & 1
 \end{pmatrix}\right\},\\
w_{l-1,1} &= \begin{pmatrix}
 & I_{l-1}& &\\
 1& & &\\
 & & &1\\
 & & I_{l-1} &\\
\end{pmatrix},
\end{align*}
where $r^\prime ={^t\sigma(r)}$ and $x^\prime = \sigma(x)$.
The integral is absolutely convergent for $\mathrm{Re} s \gg 0$ and admits a meromorphic continuation to $\C(q^{-s})$. %Moreover, there is a choice of data for which it becomes a non-zero constant, for all $s$. \footnote{I found this in sp, We don't need this but is it ture in this case?}

Let $M(\tau,s):V(\tau,s) \rightarrow  V(\tau^*,1-s)$ be the intertwining operator, given by the meromorphic continuation of the following integral:
\[
[M(\tau,s)f_s] (h,a) = \int_F f_s\left(\pmat{}{-1}{1}{} \pmat{1}{u}{}{1}h,-a^*\right) du,
\]
where $h \in \u_{2}$, $a \in E^\times$ and $du$ is the Haar measure with respect to $\psi_F$.

Let $\Lambda(\tau,s)$ be the following Whittaker functional defined by the analytic continuation of
\[
\Lambda(\tau,s)f = \int_{F} f\left(\pmat{}{-1}{1}{} \pmat{1}{u}{}{1}, 1\right) \psi_F^{-1}(u) du
\]
and more directly by
\begin{equation}\label{whi.fun.AC}
\lim_{N \rightarrow \infty} \int_{\frakp_F^{-N}} f\left(\pmat{}{-1}{1}{} \pmat{1}{u}{}{1}, 1\right) \psi_F(u) du
\end{equation}
where $f$ is a flat section.
By the uniqueness of the Whittaker functional, there is a constant $C(s,\tau,\psi)$ such that
\begin{equation}\label{localcoefficient}
\Lambda(\tau,s) = C(s,\tau,\psi) \cdot \Lambda(\tau^*,1-s) \circ M(\tau,s).
\end{equation}
We define the normalized intertwining operator as $M^*(\tau,s) = C(s,\tau,\psi)M(\tau,s)$. It admits the functional equation
\begin{equation}
\int_{F} f_s\left(\pmat{}{-1}{1}{} \pmat{1}{u}{}{1}, 1\right) \psi_F^{-1}(u) du = \int_{F} [M^*(\tau,s)f_s]\left(\pmat{}{-1}{1}{} \pmat{1}{u}{}{1}, 1\right) \psi_F^{-1}(u) du.
\end{equation}
Let $\mathcal{L}^*(W,f_s,\phi) = \mathcal{L}(W,M^*(\tau,s)f_s,\phi).$ The Rankin-Selberg integrals ${\mathcal{L}^*(W,f_s,\phi)}$ and ${\mathcal{L}(W,f_s,\phi)}$ are trilinear maps satisfying a certain collection of functional equations. Such trilinear maps form a one-dimensional vector space (cf. \cite{Mor23}). Hence they are proportional and we denote
\[
\mathcal{L}^*(W,f_s,\phi) = \gamma_0(s,\tau,\Upsilon,\psi)\mathcal{L}(W,f_s,\phi).
\]
We further denote the normalized gamma factor as
\begin{equation}\label{defn:norgam}
\gamma(s,\tau,\Upsilon,\psi) = \omega_\pi(-1)\tau(-1)^l\gamma_0(s,\tau,\Upsilon,\psi)
\end{equation}
where $\omega_\pi$ is the central character of the representation $\pi$.

It worth mentioning that the Rankin-Selberg gamma factor $\gamma(s,\pi \times \tau, \Upsilon, \psi)$ in this paper is the normlized one $\Gamma(s,\pi \times \tau, \Upsilon, \psi)$ in \cite{Mor23}.

In the rest of the section, we compute $C(s,\tau,\psi)$.
\begin{prop}
$C(s,\tau,\psi) = \tau(-1)\gamma(2s-1,\tau|_{F^\times},\psi_F).$
\end{prop}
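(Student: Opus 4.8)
The plan is to reduce the computation of $C(s,\tau,\psi)$ to the well-known local coefficient for $\GL_1 \times \GL_1$ over $F$, i.e.\ to the Tate gamma factor. The setup is a principal-series induction on $\u_2$, and $\u_2$ is anisotropic modulo center essentially behaving like a rank-one group, so the intertwining operator $M(\tau,s)$ is a one-variable integral over $F$ (the $u$-integration), and the Whittaker functional $\Lambda(\tau,s)$ is a one-variable integral of the same shape. First I would restrict the flat section $f_s$ in $V(\tau,s)$ to the relevant one-parameter subgroup: evaluating $f_s$ along $\pmat{}{-1}{1}{}\pmat{1}{u}{}{1}$ and tracking the argument $a \in E^\times$, one sees that $C(s,\tau,\psi)$ depends only on the restriction $\tau|_{F^\times}$ and on the additive character $\psi_F$ of $F$, not on the full character $\tau$ of $E^\times$ nor on $\Upsilon$. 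Concretely, $f_s(g,a) = \tau(a) f_s(g,1)$, so both functionals $\Lambda(\tau,s)$ and $\Lambda(\tau^*,1-s)\circ M(\tau,s)$ factor through the scalar function $u \mapsto f_s\!\left(\pmat{}{-1}{1}{}\pmat{1}{u}{}{1},1\right)$ on $F$, and the proportionality constant becomes a ratio of $\psi_F$-twisted integrals.

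Second, I would identify the modular/normalization exponent: the induction parameter $V(\tau,s)$ was set up so that $f_s\left(\pmat{m}{}{}{m^*}g,a\right) = |m|_E^{s} f_s(g,am)$, and since $|m|_E = |m|_F^2$ for $m \in F^\times$, the effective $\GL_1$-over-$F$ parameter is $2s$ rather than $s$; matching the standard $\GL_1(F)$ intertwining integral $\int_F \tau|_{F^\times}(u)|u|_F^{2s-1} \psi_F^{-1}(u)\,du$ against Tate's local functional equation gives a gamma factor at the shifted point $2s-1$. This is where the exponent $2s-1$ in the statement comes from. I would be careful to track the self-dual versus $\psi_F$-normalized Haar measure $du$ used in the definition of $M(\tau,s)$ and $\Lambda(\tau,s)$, since that choice is exactly what pins down the constant (as opposed to a constant times a power of $q_F$).

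Third, the sign $\tau(-1)$ comes from the appearance of $-a^*$ in the definition of $M(\tau,s)$ (the intertwining operator lands in $V(\tau^*,1-s)$ with argument $-a^*$ rather than $a$): unwinding $\Lambda(\tau^*,1-s)\circ M(\tau,s)$ on the flat section and using $\tau^*(-1) = \tau(\sigma(-1)) = \tau(-1)$ together with $\tau^*(a^*) = \tau(a)$ produces an extra factor of $\tau^*(-1) = \tau(-1)$ relative to the naive $\GL_1$ computation. Collecting: $\Lambda(\tau,s) = \tau(-1)\,\gamma(2s-1,\tau|_{F^\times},\psi_F)\cdot \Lambda(\tau^*,1-s)\circ M(\tau,s)$, which is the claim.

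The main obstacle I anticipate is bookkeeping rather than conceptual: correctly normalizing the Haar measure $du$ (self-dual w.r.t.\ $\psi_F$ versus giving $\calO_F$ volume one) and tracking how the Iwasawa decomposition on $\u_2$ interacts with the $|m|_E = |m|_F^2$ discrepancy, so that no stray power of $q_F$ or conductor-dependent constant survives. Once the one-variable integral is isolated and the measure is fixed, the identification with Tate's $\gamma(2s-1,\tau|_{F^\times},\psi_F)$ and the extraction of the sign $\tau(-1)$ are formal. I would also double check the analytic-continuation formulation \eqref{whi.fun.AC} of $\Lambda(\tau,s)$ on flat sections matches the principal-value convention used in Tate's thesis so that the constant has no spurious sign.
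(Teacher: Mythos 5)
Your strategy matches the paper's: reduce $\Lambda(\tau,s)$ and $\Lambda(\tau^*,1-s)\circ M(\tau,s)$ to one-variable regularized integrals over $F$ of the scalar function $x \mapsto f_s\bigl(w_0\pmat{1}{x}{}{1},1\bigr)$, obtain the shift $2s-1$ from $|m|_E=|m|_F^2$, and match against Tate's local functional equation, with the $\tau(-1)$ traceable to the $-a^*$ in the definition of $M(\tau,s)$. To turn the sketch into a proof you would need the concrete ingredients the paper supplies: the matrix identity $w_0\pmat{1}{y}{}{1}w_0\pmat{1}{x}{}{1}=\pmat{y^{-1}}{-1}{}{y}w_0\pmat{1}{x-y^{-1}}{}{1}$, the choice of a section with $f_s\bigl(w_0\pmat{1}{x}{}{1},1\bigr)=\mathbf{1}_{\frakp_F^M}(x)$, and the regularized Tate integral $\lim_{N\to\infty}\int_{\frakp_F^{-N}}\tau(y)\psi_F(y)|y|_F^{2s-2}\,dy=\gamma(2-2s,\tau^{-1}|_{F^\times},\psi_F)$ followed by the functional equation $\gamma(s,\chi,\psi)\gamma(1-s,\chi^{-1},\psi)=\chi(-1)$.
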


\begin{proof}
For convenience, we rewrite the functional equation \eqref{localcoefficient} as
\begin{equation}\label{func.eq.C}
\Lambda^\prime \circ M = C(s,\tau,\psi)^{-1} \Lambda.
\end{equation}
Applying \eqref{whi.fun.AC}, we have
\[
\Lambda^\prime \circ M(f_s) = \lim_{N\rightarrow \infty} \int_{\frakp_F^{-N}} \int_F f_s\left(w_0 \pmat{1}{y}{}{1} w_0  \pmat{1}{x}{}{1},-1 \right) \psi_F^{-1}(x) dy dx.
\]
By the compactness of $\frakp_F^{-N}$ and the absolute convergence of the integral when $\mathrm{Re}~s \gg 0$, we can change the order of integrals and obtain
\[
\Lambda^\prime \circ M(f_s) = \lim_{N\rightarrow \infty} \int_F \int_{\frakp_F^{-N}} f_s\left(w_0 \pmat{1}{y}{}{1} w_0  \pmat{1}{x}{}{1},-1 \right)  \psi_F^{-1}(x) dx dy.
\]
Observing that 
\(
w_0 \pmat{1}{y}{}{1} w_0  \pmat{1}{x}{}{1} = \pmat{y^{-1}}{-1}{}{y} w_0 \pmat{1}{x-y^{-1}}{}{1}
\) and applying the definition of $f_s$, we have 
\[
\lim_{N\rightarrow \infty} \int_F \int_{\frakp_F^{-N}} \tau(-y^{-1})|y|_E^{-s}f_s\left(w_0 \pmat{1}{x-y^{-1}}{}{1},1 \right)  \psi_F^{-1}(x) dx dy.
\]
Applying the change of variable $y \mapsto y^{-1}$ and noticing that $\frac{dy}{|y|_F}$ is invariant under the change of variable, we have
\begin{align}
\lim_{N\rightarrow \infty} \int_F \int_{\frakp_F^{-N}} \tau(-y)|y|_E^{s-1} f_s\left(w_0 \pmat{1}{x-y}{}{1},1 \right)  \psi_F^{-1}(x) dx {dy}. \label{ddd}
\end{align}
Applying \eqref{func.eq.C} and \eqref{ddd}, we obtain
\[
\lim_{N\rightarrow \infty} \int_F \int_{\frakp_F^{-N}} \tau(-y)|y|_E^{s-1} \phi(x-y)  \psi_F^{-1}(x) dx {dy} = C(s,\tau,\psi) ^{-1} \lim_{N \rightarrow \infty} \int_{\frakp_F^{-N}} \phi(x) \psi_F^{-1}(x) dx
\]
where we denote $\phi(x) = f_s\left(w_0 \pmat{1}{x}{}{1}\right)$.
Further, by reorganizing the left hand side, we obtain
\begin{equation}\label{loc.coeff.fun.eq}
\lim\limits_{N\rightarrow \infty} \int_F \tau(-y) |y|_E^{s-1} \psi_F^{-1}(y) \int_{\frakp_F^{-N}} \phi(x-y) \psi_F^{-1}(x-y) dx dy \atop = C(s,\tau,\psi)^{-1} \lim\limits_{N \rightarrow \infty} \int_{\frakp_F^{-N}} \phi(x) \psi_F^{-1}(x) dx.
\end{equation}
Let $f_s$ be such that $\phi$ is the characteristic function on $\frakp_F^M$ where $N \geq -M$. Such function $f_s$ exists, and we note that
\begin{equation}\label{id1}
\int_{\frakp_F^{-N}} \phi(x-y) \psi_F^{-1}(x-y) dx = \vol(\frakp_F^M) \mathbf{1}_{\frakp_F^{-N}}(y).
\end{equation}
Now apply \eqref{id1} to \eqref{loc.coeff.fun.eq}. Then we have
\begin{align*}
C(s,\tau,\psi)^{-1} &= \lim_{N\rightarrow \infty} \int_{\frakp_F^{-N}} \tau(-y) |y|_E^{s-1} \psi_F(-y) dy\\
&= \int_{F^\times} \tau(y) \psi_F(y) |y|_F^{2s-2} dy\\
&= \gamma(2-2s,\tau|^{-1}_{F^\times},\psi_F).
\end{align*}
We refer to \cite{Bum97}, Exercise 3.1.9, for the last identity. Lastly, we apply the functional equation for Tate's gamma factors (cf. \cite{BH06} (23.4)).
\end{proof}

% \begin{cor}\label{cor}
% Let $a \in F^\times$. $C(s, \tau, (\psi_F)_a) = \tau(a) |a|_F^{2s-\frac{3}{2}}C(s,\tau,\psi_F)$.
% \end{cor}
% \begin{proof}
% This is immediate from the last formula in the previous proof, by change of variable. This is also direct from properties of Tate's gamma factors.
% \end{proof}

\noindent\textbf{Remark:} For brevity, we skipped the discussion when $l=1$ in this section and will do the same in the later sections. However, our main result Theorem \ref{main1} will hold for $l=1$ as well and the proof will remain similar and simpler.

\subsection{Simple supercuspidal representations of $\u_{2l}$}\label{SSC}
In this section, we fix a level one additive character $\psi_F$ and let $\psi = \psi_F \circ \mathrm{tr}_{E/F}$. 
%It is worth mentioning that $\psi$ is of level one.
\begin{lem}
The additive character $\psi$ on $E$ is of level one. 
\end{lem}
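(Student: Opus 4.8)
The statement to prove is that $\psi = \psi_F \circ \mathrm{tr}_{E/F}$ has level one on $E$, given that $\psi_F$ has level one on $F$ and $E/F$ is unramified quadratic. "Level one" means the largest fractional ideal on which $\psi$ is trivial is $\mathfrak{p}_E^{-1}$ (equivalently, $\psi$ is trivial on $\mathfrak{p}_E^{-1}$ but nontrivial on $\mathcal{O}_E$ — or whatever normalization the paper uses; here level one should mean $\psi$ is trivial on $\mathcal{O}_E$ and nontrivial on $\mathfrak{p}_E^{-1}$, matching $\psi_F$ trivial on $\mathcal{O}_F$, nontrivial on $\mathfrak{p}_F^{-1}$). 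Let me sketch how I'd prove it.

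**Approach.** The plan is to use two standard facts about the trace in an unramified extension. First, since $E/F$ is unramified, the different $\mathfrak{d}_{E/F}$ is trivial, so $\mathrm{tr}_{E/F}(\mathcal{O}_E) \subseteq \mathcal{O}_F$ and more precisely $\mathrm{tr}_{E/F}(\mathfrak{p}_E^n) = \mathfrak{p}_F^n$ for every $n \in \Z$ (using that $\varpi$ is a common uniformizer, so $\mathfrak{p}_E^n = \varpi^n \mathcal{O}_E$ and $\mathrm{tr}_{E/F}(\varpi^n \mathcal{O}_E) = \varpi^n \mathrm{tr}_{E/F}(\mathcal{O}_E) = \varpi^n \mathcal{O}_F$, the last equality because the trace map $\mathcal{O}_E \to \mathcal{O}_F$ is surjective for an unramified extension — it reduces mod $\varpi$ to the trace of the residue field extension $k_E/k_F$, which is surjective since that extension is separable).

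**Key steps.** First I would show $\psi$ is trivial on $\mathcal{O}_E$: for $x \in \mathcal{O}_E$ we have $\mathrm{tr}_{E/F}(x) \in \mathcal{O}_F$, and $\psi_F$ is trivial on $\mathcal{O}_F$ (level one), so $\psi(x) = \psi_F(\mathrm{tr}_{E/F}(x)) = 1$. Second, I would show $\psi$ is nontrivial on $\mathfrak{p}_E^{-1}$: by the surjectivity statement $\mathrm{tr}_{E/F}(\mathfrak{p}_E^{-1}) = \mathfrak{p}_F^{-1}$, and since $\psi_F$ is nontrivial on $\mathfrak{p}_F^{-1}$ (level one), there exists $x \in \mathfrak{p}_E^{-1}$ with $\mathrm{tr}_{E/F}(x) \in \mathfrak{p}_F^{-1}$ and $\psi_F(\mathrm{tr}_{E/F}(x)) \neq 1$, i.e. $\psi(x) \neq 1$. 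Together these say the conductor of $\psi$ is exactly $\mathfrak{p}_E^{-1}$, which is the assertion that $\psi$ has level one. I should also remark that $\psi$ is $\mathrm{Gal}(E/F)$-invariant, since $\mathrm{tr}_{E/F}(\sigma(x)) = \mathrm{tr}_{E/F}(x)$, though that may already be noted elsewhere.

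**Main obstacle.** The only real content is the claim $\mathrm{tr}_{E/F}(\mathcal{O}_E) = \mathcal{O}_F$ — i.e. that the trace is surjective on rings of integers in the unramified case, equivalently that the different is trivial. This is where I'd be careful: I would either cite the standard fact that $\mathrm{tr}_{E/F}(\mathfrak{p}_E^n \mathfrak{d}_{E/F}^{-1}) \subseteq \mathfrak{p}_F^n$ with equality, combined with $\mathfrak{d}_{E/F} = \mathcal{O}_E$ for $E/F$ unramified, or argue directly via residue fields: the reduction mod $\mathfrak{p}$ of $\mathrm{tr}_{E/F}\colon \mathcal{O}_E \to \mathcal{O}_F$ is the trace $k_E \to k_F$ of a separable (indeed cyclic) extension of finite fields, hence nonzero, hence surjective (its image is a nonzero $k_F$-subspace of the one-dimensional $k_F$); then $\mathrm{tr}_{E/F}(\mathcal{O}_E) + \mathfrak{p}_F = \mathcal{O}_F$, and since $\mathrm{tr}_{E/F}(\mathcal{O}_E)$ is an $\mathcal{O}_F$-submodule of $\mathcal{O}_F$ it is an ideal, so by Nakayama (or directly, since the only ideals of $\mathcal{O}_F$ containing a unit are $\mathcal{O}_F$ itself) we get $\mathrm{tr}_{E/F}(\mathcal{O}_E) = \mathcal{O}_F$. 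Everything else is a one-line consequence. I expect the paper's proof to be essentially this, possibly compressed to a sentence or two citing local field theory.
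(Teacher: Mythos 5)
Your proof is correct, but note that your normalization of ``level one'' is off by one from the paper's: the paper takes level one to mean $\psi_F$ is trivial on $\frakp_F$ and nontrivial on $\calO_F$ (not trivial on $\calO_F$ and nontrivial on $\frakp_F^{-1}$), so the first step in the paper shows $\psi$ trivial on $\frakp_E$ and the second finds $x \in \calO_E$ with $\psi(x) \neq 1$. This shift does not change the substance of your argument. Where your approach genuinely differs is in how it establishes nontriviality. You prove, uniformly in $p$, that $\mathrm{tr}_{E/F} \colon \calO_E \to \calO_F$ is surjective by reducing modulo $\frakp$ to the nonzero $k_F$-linear trace of the separable residue extension $k_E/k_F$ and applying Nakayama (or the ideal structure of $\calO_F$); nontriviality of $\psi$ at the right level is then immediate. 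The paper instead splits into cases: for $p \neq 2$ it restricts to elements of $F$, where $\mathrm{tr}_{E/F}(x) = 2x$ and $2$ is a unit; for $p=2$ this fails, and it invokes Tits for the existence of $a \in \calO_E^\times$ with $\mathrm{tr}_{E/F}(a) = -1$. Your single argument absorbs both cases and is in fact a cleaner route to the same key fact (surjectivity of the trace on integer rings for unramified extensions), so the difference is stylistic rather than substantive. Both proofs are valid.
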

\begin{proof}
Let $x \in \frakp_E$, then $\mathrm{tr}(x) \in \frakp_F$. Hence $\psi(\mathrm{tr}(x)) = 1$ and the level of $\psi$ is not greater than one.

Let $x \in \calO_F$ be such that $\psi(x) \neq 1$.
When $p \neq 2$, $\psi(x) \neq 1$ shows that the level of $\psi$ is exactly one.
When $p = 2$, there exists an element $a \in \calO_E^\times$ such that $\mathrm{tr}_{E/F}(a) = -1$ (cf. \cite{Tits79}, p. 41).
Hence for any $x \in \calO_F^\times$, there exists $a\in \calO_E^\times$ such that $\mathrm{tr}_{E/F}(a) = x$. This shows the existence of an element $x$ where $\psi(x) \neq 1$ and the level is exactly one.
\end{proof}

Let $I=I_{\u_{2l}}$ be the standard Iwahori subgroup of the unitary group $\u_{2l}$ and let $I^{+}$ and $I^{++}$ be its filtration groups. 
For any $b \in \calO_F^\times$ modulo $1+\frakp_F$, we define a character on $I^+/I^{++} \cong k_F^{\oplus (l-1)} \oplus k_E \oplus k_E$ by
%\chi_b(y) = \chi_b^{\u_{2n}}(y) &= \tilde{\psi}(([y_{1,2}] + \dots + [y_{l-1,l}])) \cdot \psi([y_{l,l+1}] + [by_{2l,1} \varpi^{-1}])\nonumber\\
\begin{equation}
\chi_b(y) = \psi(y_{1,2} + \dots + y_{l-1,l} + \frac{y_{l,l+1}}{2} + \frac{by_{2l,1}\varpi^{-1}}{2} )\label{chib},
\end{equation}
as that in \cite{Oi18}.
In parameterizing the simple supercuspidals of $\u_{2l}$, we define a character $\chi_{\omega^1,b}$ by
\begin{align*}
\chi_{\omega^1,b}(zy) = \omega^1([z])\chi_b(y)
\end{align*} 
where $z \in Z_{\U_{2l}}$, $[z]$ its projection onto $k^1_E$, $y \in I^+$, and $\omega^1$ a character on $k_E^1 := \mathrm{ker}(N_{k_E/k_F})$. Here $N_{k_E/k_F}$ is the norm map with respect to the field extension $k_E/k_F$.

We define the representation $\pi_{\omega^1,b} = \cind_{ZI^+}^{\u_{2l}} \chi_{\omega^1,b}$ by compact induction where $\omega^1 \in (k_E^1)^\vee$, $b \in k_F^\times$. Varying $\omega^1$ and $b$ yields the simple supercuspidal irreducible representations of $\u_{2l}$, which we have $q^2-1$ in total. Since $b$ and $\varpi$ appear as $b\varpi^{-1}$ in the parametrization, one can also parameterize the representations by fixing $b = 1$ and vary the uniformizer $\varpi$ modulo $1+\frakp_F$.

In order to have the Rankin-Selberg integrals, we need the genericity of the representations.
\begin{lem}
 The simple supercuspidal representation $\pi_{\omega^1,b}$ is generic with respect to $\psi_{N_l}^{-1}$. 
 \end{lem}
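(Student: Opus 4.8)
The plan is to exhibit a nonzero Whittaker functional on $\pi_{\omega^1,b}$ with respect to $\psi_{N_l}^{-1}$, using the standard Frobenius reciprocity / Mackey-theory criterion for compactly induced representations. Since $\pi_{\omega^1,b} = \cind_{ZI^+}^{\u_{2l}}\chi_{\omega^1,b}$, the space $\Hom_{N_l}(\pi_{\omega^1,b},\psi_{N_l}^{-1})$ is computed by a sum over double cosets $ZI^+\backslash \u_{2l}/N_l$, and genericity is equivalent to the existence of at least one double coset representative $g$ for which the characters $\chi_{\omega^1,b}$ and $\psi_{N_l}^{-1}$ agree on the intersection $g^{-1}(ZI^+)g \cap N_l$ (more precisely on $(ZI^+)\cap gN_lg^{-1}$, transported appropriately). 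Concretely, I would show that $g = 1$ already works: the relevant intersection is $I^+\cap N_l$, and on this group $\chi_{\omega^1,b}$ is given by $\psi(y_{1,2}+\dots+y_{l-1,l}+\tfrac{1}{2}y_{l,l+1})$ — the coordinate $y_{2l,1}$ does not meet $N_l$ — which is exactly the restriction of $\psi_{N_l}^{-1}=\psi_{N_l,1}^{-1}$ after accounting for the sign conventions relating $\psi$ and $\psi^{-1}$ and the factor $-\tfrac12$ in the definition of $\psi_{N_l,\alpha}$.

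The key steps, in order, would be: (1) recall the affine-root description of $I^+$ and $I^{++}$ for the quasi-split unramified $\u_{2l}$, so that $I^+\cap N_l$ and its image in $I^+/I^{++}$ are identified explicitly, and note that the simple-root coordinates $y_{i,i+1}$ for $1\le i\le l-1$ together with $y_{l,l+1}$ generate this image while $y_{2l,1}$ (the lowest affine root) lies outside $N_l$; (2) verify that $\chi_{\omega^1,b}|_{I^+\cap N_l}$ matches $\psi_{N_l}^{-1}|_{I^+\cap N_l}$, and that $Z$ contributes nothing since $Z\cap N_l=\{1\}$; (3) invoke the general principle (e.g. as in \cite{GR10}, or the analogous arguments of \cite{Ad15,AK19}) that for a simple supercuspidal representation this local matching on one cell already produces a nonzero Whittaker functional, i.e. the intertwining integral $\ell(v) = \int_{(I^+\cap N_l)\backslash N_l} \ldots$ converges (the domain is compact modulo the intersection) and is nonzero because the integrand is supported on, and nonvanishing at, the identity coset. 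One should also check that no \emph{other} double coset contributes a competing functional that could cause cancellation — but since we only need \emph{one} nonzero functional, this is not required for genericity itself.

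The main obstacle I anticipate is purely bookkeeping rather than conceptual: pinning down the precise structure of $I^+$ and $I^{++}$ as subgroups of $\u_{2l}\subset\GL_{2l}(E)$ in the chosen coordinates (the matrix $J_{2l}$ with the $w_l$-antidiagonal form), and making sure the entries appearing in $\chi_b$ — in particular the "wrap-around" entry $by_{2l,1}\varpi^{-1}$ coming from the lowest affine root — are correctly identified and seen to be disjoint from $N_l$. Once that combinatorial description is fixed, the character-matching in step (2) is a one-line comparison, and step (3) is the standard Mackey-theoretic conclusion. I would also remark, as the excerpt does for the case $l=1$, that when $l=1$ the group $N_1$ is the single root subgroup $\pmat{1}{x}{}{1}$ and the verification simplifies accordingly.
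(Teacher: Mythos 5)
Your Mackey-theoretic framing is sound in outline, but the crucial character-matching claim in your step~(2) is false as stated, and this is exactly the point the paper handles by an extra conjugation. On $I^+\cap N_l$, the character $\chi_{\omega^1,b}$ restricts to $\psi\bigl(y_{1,2}+\cdots+y_{l-1,l}+\tfrac12 y_{l,l+1}\bigr)$, whereas from the definition $\psi_{N_l,\alpha}=\psi\bigl(\sum_{i<l} n_{i,i+1}-\tfrac{\alpha}{2}n_{l,l+1}\bigr)$ one computes
\[
\psi_{N_l}^{-1}(u)=\psi_{N_l,1}^{-1}(u)=\psi\Bigl(-\sum_{i=1}^{l-1} n_{i,i+1}+\tfrac12 n_{l,l+1}\Bigr).
\]
The two expressions disagree by a sign on every one of the $l-1$ coordinates $y_{i,i+1}$, $1\le i\le l-1$; only the $y_{l,l+1}$ term matches. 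No amount of ``accounting for sign conventions'' removes this mismatch. What your $g=1$ cell actually matches is $\psi_{N_l,-1}(u)=\psi\bigl(\sum_{i<l}n_{i,i+1}+\tfrac12 n_{l,l+1}\bigr)$, so the argument as written proves $\psi_{N_l,-1}$-genericity, not $\psi_{N_l}^{-1}$-genericity.

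The paper closes this gap explicitly: it first writes down a $\psi_{N_l,-1}$-Whittaker function supported on $N_lZI^+$ (which is the concrete realization of your Mackey functional at $g=1$), and then observes that $\psi_{N_l,-1}$ and $\psi_{N_l}^{-1}$ are rationally conjugate via the diagonal torus element $\iota=\mathrm{diag}\bigl((-1)^{-l+1},\ldots,1,1,\ldots,(-1)^{l-1}\bigr)\in T_l$, so that conjugating the Whittaker model by $\iota$ yields a $\psi_{N_l}^{-1}$-Whittaker function. You could equivalently stay inside your framework and take the double coset representative $g=\iota$ instead of $g=1$ (transporting the characters by $\iota$ fixes the signs), but either way the conjugation step is indispensable and is missing from your write-up. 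Your step~(3) is fine as a general principle, and indeed the explicit Whittaker function the paper gives is exactly the functional your Mackey argument would produce; the only defect is the misidentified generic character at $g=1$.
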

\begin{proof}
It is easy to see that any representation $\pi_{\omega^1,b}$ is $\psi_{N_l,-1}$-generic, with the following function a Whittaker function:
\[
W_0(g) = \begin{cases}
\psi_{N_l,-1}(u) \omega^1([z]) \chi_b(y) & \text{if } g = uzy \in N_l Z I^+,\\
0 & \text{otherwise}.
\end{cases}
\]
We note $\psi_{N_l,-1}$ is rationally conjugate to $\psi_{N_l}^{-1}$. Indeed, for example, by putting $$\iota:= \mathrm{diag}((-1)^{-l+1} ,(-1)^{-l+2} ,\ldots,1,1,\ldots,(-1)^{l-2} ,(-1)^{l-1}) \in T_l$$
we get
\begin{align*}
\psi_{N_l,-1}^\iota(u) &= \psi_{N_l, -1}(\iota u \iota^{-1})\\
&= \psi(-u_{12} - u_{23} - \ldots - u_{l-1,l} + \frac{1}{2} u_{l,l+1})\\
&= \psi_{N_l}^{-1}(u)
\end{align*} 
for any $u \in N_l$.
\end{proof}

In general, if $\iota$ is an element in $\u_{2l}$ and $\pi^\iota$ is a representation of $\u_{2l}$, we define $\pi^\iota$ as the representation with the action $\pi^\iota (g) = \pi(\iota g \iota^{-1})$ on the same space of $\pi$. Such representation $\pi^\iota$ is isomorphic to $\pi$ and
\[
\gamma(s, \pi \times \tau, \psi) = \gamma(s, \pi^\iota \times \tau, \psi)
\]
whenever $\pi$ is generic. In particular, when $\pi = \pi_{\omega^1,b}$ and $\iota$ is the one in the proof above, we will compute the right hand side of the above identity. The image of $W_0$ under the isomorphism $\mathcal{W}(\pi, \psi_{N_l, -1}) \rightarrow \mathcal{W}(\pi^\iota, \psi_{N_l}^{-1})$ defined by $W \mapsto W^\iota$, where $W^\iota(g) = W(\iota g \iota^{-1})$, gives us a Whittaker function in the latter.

\section{Computation of the gamma factors}
\subsection{Some computations regarding $B^-$}
Given a parabolic subgroup $P$ of $\u_{2l}$, we denote by $P^-$ its opposite parabolic subgroup. In this section, we summarize some computations which are needed in the later sections.
We write elements in the Borel subgroup $B_1^-$ of $\u_2$ as
\begin{equation}\label{b=ac}
b = \begin{pmatrix}
    a &  \\ \bar{a}^{-1}c & \bar{a}^{-1}
\end{pmatrix} =\begin{pmatrix}
    a &  \\  & \bar{a}^{-1}
\end{pmatrix}\begin{pmatrix}
    1 &  \\ c & 1
\end{pmatrix} 
\end{equation}
where $a \in E^\times$ and $c \in F$.

\begin{lem}\label{weilb}
    Let $\phi \in \mathcal{S}(E)$ and $b$ be given as \eqref{b=ac}. Then
\begin{equation*}
    [\omega_{\psi^{-1},\Upsilon^{-1}}(b)\phi](x) = \Upsilon^{-1}(-a)|a|_E^{\frac{1}{2}}\int_E \psi_F \left(-\frac{1}{2}c y \bar{y}\right)\psi(ax\bar{y})\int_E \phi(z)\psi(-y\bar{z}) dz dy.
\end{equation*}
Here $dz$ and $dy$ are self-dual with respect to $\psi$.
\end{lem}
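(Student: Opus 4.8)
The plan is to decompose $b$ according to \eqref{b=ac} as the product of a diagonal element and a lower-triangular unipotent element, and to apply the Weil representation formulas \eqref{weil:diag}--\eqref{weil:weyl} in turn, being careful that we are working with $\omega_{\psi^{-1},\Upsilon^{-1}}$ rather than $\omega_{\psi,\Upsilon}$. First I would observe that the lower unipotent $\pmat{1}{}{c}{1}$ is conjugate to the upper unipotent by the Weyl element $w_0 = \pmat{}{1}{-1}{}$: explicitly $\pmat{1}{}{c}{1} = w_0 \pmat{1}{-c}{}{1} w_0^{-1}$, and $w_0^{-1} = \pmat{}{-1}{1}{} = -w_0$, so up to the central sign (which acts trivially on the relevant formulas, or can be absorbed) we can write $\omega_{\psi^{-1},\Upsilon^{-1}}(b)$ as a composition of the diagonal action, then $\omega_{\psi^{-1},\Upsilon^{-1}}(w_0)$, then the upper-unipotent action with parameter $-c$, then $\omega_{\psi^{-1},\Upsilon^{-1}}(w_0)^{-1}$.

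Next I would substitute the three formulas, using that for the character $\psi^{-1}$ the Fourier transform in \eqref{weil:weyl} becomes $\widehat{\phi}^{\psi^{-1}}(\xi) = \int_E \phi(z)\psi(-\bar z\xi)\,dz$ (still with the $\psi$-self-dual measure, since $\psi$ and $\psi^{-1}$ have the same self-dual measure), the upper-unipotent formula \eqref{weil:uni} with $\psi$ replaced by $\psi^{-1}$ contributes a factor $\psi\big(-\tfrac12 (\xi\overline{\xi x})\big)$, and the diagonal formula \eqref{weil:diag} with $\Upsilon$ replaced by $\Upsilon^{-1}$ contributes $\Upsilon^{-1}(a)|a|_E^{1/2}\phi(\xi a)$. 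Carrying out the composition in the correct order and tracking the variable of evaluation $x$ through each step, one collects: an outer Fourier transform in a variable $y$ against $\psi(\pm\,\cdot)$, a middle factor from the unipotent with parameter $-c$ of the form $\psi^{-1}\big(-\tfrac12 c\, y\bar y\big) = \psi_F\big(-\tfrac12 c\, y\bar y\big)$ (using $y\bar y \in F$ so that $\psi$ on it is $\psi_F$), the diagonal scaling that turns $\phi(z)$ into $\phi(z)$ with the argument rescaled appropriately and produces the $\Upsilon^{-1}(a)|a|_E^{1/2}$ factor, and an inner Fourier transform of $\phi$ in a variable $z$. Matching signs from the two applications of $w_0$ and from conjugating the unipotent is exactly what produces the $\psi(ax\bar y)$ and $\psi(-y\bar z)$ kernels and the $\Upsilon^{-1}(-a)$ (the extra $-1$ coming from $w_0^{-1} = -w_0$ interacting with the diagonal, or equivalently from $a^\ast$ versus $a$ bookkeeping).

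The main obstacle I expect is purely one of bookkeeping: getting all the signs right — the sign in the argument of $\Upsilon^{-1}$, the sign inside the $\psi_F(-\tfrac12 c y\bar y)$ factor, and the relative signs in the two Fourier-transform kernels $\psi(ax\bar y)$ and $\psi(-y\bar z)$ — since these depend delicately on whether one writes $w_0$ or $w_0^{-1}$, on the order of composition, and on the conventions in \eqref{weil:uni}--\eqref{weil:weyl} for $\psi$ versus $\psi^{-1}$. A secondary point to check is that the $\psi$-self-dual measure is used consistently at both Fourier steps and that no Fourier-inversion Jacobian ($|a|_E$ factors from rescaling inside a Fourier transform) has been dropped; a good consistency check is to verify the formula reduces correctly when $c=0$ (only the diagonal acts, and the double integral should collapse to $\Upsilon^{-1}(-a)|a|_E^{1/2}\phi(xa)$ by Fourier inversion) and when $a=1$ (pure lower unipotent).
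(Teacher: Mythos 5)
Your plan follows essentially the same route as the paper's proof: decompose $b$ via \eqref{b=ac} into a diagonal element times a lower unipotent, conjugate the lower unipotent to an upper unipotent by the Weyl element, and apply \eqref{weil:diag}--\eqref{weil:weyl} for $(\psi^{-1},\Upsilon^{-1})$ in sequence. The paper organizes the $w_0^2 = -I_2$ bookkeeping slightly more cleanly by writing
$b = \begin{pmatrix} -a & \\ & -\sigma(a)^{-1}\end{pmatrix} w_0 \begin{pmatrix} 1 & -c \\ & 1\end{pmatrix} w_0$,
so that only $\omega(w_0)$ (and never $\omega(w_0^{-1})$) is used. One caution on your hedge ``which acts trivially\dots or can be absorbed'': the central element does \emph{not} act trivially, since $\omega_{\psi^{-1},\Upsilon^{-1}}(-I_2)\phi(\xi) = \Upsilon^{-1}(-1)\phi(-\xi)$, and the reflection $\xi\mapsto -\xi$ must be propagated through the Fourier transforms to get the correct signs in both kernels $\psi(ax\sigma(y))$ and $\psi(-y\sigma(z))$, not only the $\Upsilon^{-1}(-a)$ factor; your final sentence shows you are aware of this, so just be sure the reflection is not dropped in the write-up. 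A second small point: with the paper's convention $\psi = \psi_F\circ\mathrm{tr}_{E/F}$ one has $\psi|_F(t) = \psi_F(2t)$, not $\psi_F(t)$, so the identification ``$\psi$ on $F$ is $\psi_F$'' is loose; the factor the unipotent produces is really a value of $\psi^{-1}$ on $F$, which is consistent with the form $\psi_F(-c\,y\sigma(y))$ used in Lemma~\ref{lem:r-sint}.
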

\begin{proof} Applying (\ref{weil:diag})-(\ref{weil:weyl}), we see that
\begin{align*}
    [\omega_{\psi^{-1},\Upsilon^{-1}}(b)\phi](x)
    &=\omega_{\psi^{-1},\Upsilon^{-1}}\left(
    \begin{pmatrix}
        -a & \\  & -\bar{a}^{-1}
    \end{pmatrix}
    \begin{pmatrix}
         & 1\\ -1 & 
    \end{pmatrix}
    \begin{pmatrix}
        1 & -c\\  & 1
    \end{pmatrix}
    \begin{pmatrix}
         & 1\\ -1 & 
    \end{pmatrix}\right)\phi(x)\\%line 1
    &=\Upsilon^{-1}(-a)|a|_E^{\frac{1}{2}}\omega_{\psi^{-1},\Upsilon^{-1}}\left(
    \begin{pmatrix}
         & 1\\ -1 & 
    \end{pmatrix}
    \begin{pmatrix}
        1 & -c\\  & 1
    \end{pmatrix}
    \begin{pmatrix}
         & 1\\ -1 & 
    \end{pmatrix}\right)\phi(-ax)\\%\line 2
   % &=\Upsilon^{-1}(-a)|a|_E^{\frac{1}{2}}\left(\omega_{\psi^{-1},\Upsilon^{-1}}\left(
   % \begin{pmatrix}
    %    1 & -c\\  & 1
   % \end{pmatrix}
    %\begin{pmatrix}
    %     & 1\\ -1 & 
   % \end{pmatrix}\right)\phi\right)^\wedge(-ax)\\%\line 3
    &=\Upsilon^{-1}(-a)|a|_E^{\frac{1}{2}}\int_E\left(\omega_{\psi^{-1},\Upsilon^{-1}}\left(
    \begin{pmatrix}
        1 & -c\\  & 1
    \end{pmatrix}
    \begin{pmatrix}
         & 1\\ -1 & 
    \end{pmatrix}\right)\phi\right)(y)\psi(ax\bar{y})dy \\%\line 4
     &=\Upsilon^{-1}(-a)|a|_E^{\frac{1}{2}}\int_E \psi_F \left(-\frac{1}{2}c y \bar{y}\right)\left(\omega_{\psi^{-1},\Upsilon^{-1}}\left(
    \begin{pmatrix}
         & 1\\ -1 & 
    \end{pmatrix}\right)\phi\right)(y)\psi(ax\bar{y})dy \\%\line 5
     &=\Upsilon^{-1}(-a)|a|_E^{\frac{1}{2}}\int_E \psi_F \left(-\frac{1}{2}c y \bar{y}\right)\psi(ax\bar{y})\int_E \phi(z)\psi(-y\bar{z}) dz dy.%\line 6
\end{align*}
\end{proof}

Given a Haar measure $dx$ (resp. $d^\times x$) on $F$ (resp. $F^\times$), we denote the volume of a measurable set $S$ in $F$ (resp. $F^\times$) by $\vol_F(S) =\int_S dx$ (resp. $\volx_F(S) = \int_S d^\times x$). Similarly we define $\vol_E$ and $\volx_E$.

\begin{lem}\label{lem:Mtausfsb1}
Let $f_s \in V(\tau,s)$ and $b$ be given as (\ref{b=ac}), with $a \in 1+\frakp_E$ and $c \neq 0$. Then
\[
[M(\tau,s)f_s](b,1) = \vol_F(\calO_F^\times) \int_{F^\times}|u|_E^{-s+\frac{1}{2}} \tau(u^{-1}) f_s\left(
\begin{pmatrix} 1 & \\u^{-1}\bar{a}a + c& 1\end{pmatrix},-1\right)d^\times u.
\]
Here $d^\times u$ is the Haar measure of $F^\times$ normalized as $\volx_F(\calO_F^\times) = 1$.
\end{lem}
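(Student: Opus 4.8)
The plan is to unfold the defining integral for $M(\tau,s)$ applied to $f_s$ evaluated at $(b,1)$, then use the Iwasawa-type decomposition of the Weyl-translate to extract the stated integral over $F^\times$. First I would write
\[
[M(\tau,s)f_s](b,1) = \int_F f_s\left(\pmat{}{-1}{1}{}\pmat{1}{u}{}{1}b,-1\right)du,
\]
using that $b^* = b$ up to the relevant normalization so that the argument in the second slot is $-a^* = -1$ when we track through $b$ as in \eqref{b=ac} (here one uses $a \in 1+\frakp_E$, so $|a|_E = 1$ and $\tau(a) = 1$ since $\tau$ is tamely ramified — but more carefully we should just carry the $a$ along and see it cancel). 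Next, writing $b = \pmat{a}{}{}{\bar a^{-1}}\pmat{1}{}{c}{1}$ and noting $\pmat{1}{u}{}{1}\pmat{a}{}{}{\bar a^{-1}} = \pmat{a}{}{}{\bar a^{-1}}\pmat{1}{u\bar a a^{-1}\cdot(\text{something})}{}{1}$ — more precisely conjugating the upper unipotent past the torus element rescales $u$ — I would reduce the inner matrix to the form $\pmat{}{-1}{1}{}\pmat{1}{u'}{}{1}\pmat{1}{}{c}{1}$ times a left torus factor, and then collapse $\pmat{1}{u'}{}{1}\pmat{1}{}{c}{1}$ by the standard $2\times 2$ identity.

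The key computational step is the $2\times 2$ Bruhat identity: for $u' \neq 0$,
\[
\pmat{}{-1}{1}{}\pmat{1}{u'}{}{1}\pmat{1}{}{c}{1}
= \pmat{u'^{-1}}{}{}{u'}\pmat{1}{}{}{1}\pmat{}{-1}{1}{}\pmat{1}{}{u'^{-1}+c}{1}
\]
(up to a sign bookkeeping in the antidiagonal Weyl element and a harmless extra upper-unipotent factor that is absorbed by the flat-section condition), so that applying the definition of $f_s$ pulls out $|u'|_E^{?}\,\tau(u'^{-1})$ from the diagonal torus factor, and the change of variables $u' \mapsto u'$ identifies $du = \vol_F(\calO_F^\times)\,d^\times u$ on the support, after splitting $F = \{0\}\cup F^\times$ and writing $du = |u|_F\,d^\times u$; one must check that the value at $u'=0$ contributes measure zero and that the exponent of $|u|_E$ works out to $-s+\tfrac12$ (combining the $|m|_E^s$ from $V(\tau,s)$ with the $\delta$-shift and the factor $|u|_F = |u|_E^{1/2}$). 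I would also need to verify that the factor $\Upsilon^{-1}(-a)|a|_E^{1/2}$-type contributions and the $c$-dependence combine to give exactly $u^{-1}\bar a a + c$ inside the final $f_s$; tracking whether it is $\bar a a$ or $a\bar a$ and the placement of $u^{-1}$ is the delicate part.

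The main obstacle I anticipate is the careful bookkeeping of the torus conjugations and the Weyl-element signs: conjugating $\pmat{1}{u}{}{1}$ past $\pmat{a}{}{}{\bar a^{-1}}$ and then past the antidiagonal Weyl element introduces rescalings of $u$ by powers of $a,\bar a$ and sign changes, and one must confirm these assemble into precisely the expression $u^{-1}\bar a a + c$ and the clean prefactor $\vol_F(\calO_F^\times)$ with no stray $\tau$- or $\Upsilon$-values surviving (this is where the hypotheses $a \in 1+\frakp_E$ and $\tau$ tamely ramified — hence trivial on $1+\frakp_E$ — and the analogous triviality of $|a|_E$ get used to kill the leftover terms). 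The convergence/meromorphic-continuation issue is routine: the computation is valid for $\mathrm{Re}\,s \gg 0$ where the defining integral for $M(\tau,s)$ converges, and both sides extend meromorphically, so the identity propagates.
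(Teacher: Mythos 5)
Your overall plan is correct and essentially the same as the paper's: unfold the defining integral, factor the argument of $f_s$ into a torus element times an upper unipotent times a residual lower unipotent, apply the $V(\tau,s)$-transformation law, and convert $du$ to $d^\times u$. But the displayed ``key computational step'' is not merely imprecise; it is wrong in a way your disclaimer does not cover. There should be \emph{no Weyl element} on the right side. The correct identity, for $u'\in F^\times$, is
\[
\pmat{}{-1}{1}{}\pmat{1}{u'}{}{1}\pmat{1}{}{c}{1}
= \pmat{u'^{-1}}{}{}{u'}\pmat{1}{-u'}{}{1}\pmat{1}{}{u'^{-1}+c}{1},
\]
whereas your right side reads $\pmat{u'^{-1}}{}{}{u'}\pmat{1}{}{}{1}\pmat{}{-1}{1}{}\pmat{1}{}{u'^{-1}+c}{1}$, which does not equal the left side for any choice of signs. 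If a $w_0$ genuinely survived, $f_s$ would end up evaluated at a point containing the long Weyl element, which the $V(\tau,s)$-transformation law cannot absorb, and the lemma's right-hand side (with $f_s$ evaluated at a purely lower-triangular matrix) would not emerge. This is a structural error in the factorization, not a sign issue or a ``harmless'' extra unipotent.

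Once the identity is repaired, your outline does go through: conjugating $\pmat{a}{}{}{\bar a^{-1}}$ past the upper unipotent replaces $u$ by $u' = u(a\bar a)^{-1}$, so $u'^{-1}+c = u^{-1}\bar a a + c$ as in the lemma; the surviving torus factor is $\pmat{u^{-1}a}{}{}{u\bar a^{-1}}$, and $|a|_E = 1$, $\tau(a) = 1$ (for $a \in 1+\frakp_E$, $\tau$ tame) eliminate the $a$-dependence; finally $du = \vol_F(\calO_F^\times)\,|u|_E^{1/2}\,d^\times u$ produces the exponent $-s+\tfrac12$. For comparison, the paper sidesteps the Bruhat computation altogether: it inserts $\pmat{1}{u^{-1}}{}{1}$ on the left of the argument of $f_s$ (harmless by left-$N_1$-invariance), so that $\pmat{1}{u^{-1}}{}{1}\pmat{}{-1}{1}{}\pmat{1}{u}{}{1} = \pmat{u^{-1}}{}{1}{u}$ is already lower triangular, then multiplies by $b$ and reads off the factorization $\pmat{u^{-1}a}{}{}{u\bar a^{-1}}\pmat{1}{}{u^{-1}\bar a a + c}{1}$ directly.
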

\begin{proof}
By definition of the standard intertwining operator, for any $g \in \u_2$,
\[
[M(\tau,s)f_s](g,1) = \int_{F} f_s(w_1^{-1}ug,-1) du,
\]
where $du$ is the self-dual Haar measure with respect to $\psi_F$. In particular,
\begin{align*}
[M(\tau,s)f_s](b,1) &= \int_F f_s\left(
\begin{pmatrix} & -1\\1& \end{pmatrix}
\begin{pmatrix} 1& u\\&1 \end{pmatrix}
\begin{pmatrix} a & \\\bar{a}^{-1}c& \bar{a}^{-1}\end{pmatrix},-1\right)du\\%line 1
&=\int_F f_s\left(
\begin{pmatrix} 1& u^{-1}\\& 1\end{pmatrix}
\begin{pmatrix} & -1\\1& \end{pmatrix}
\begin{pmatrix} 1& u\\& 1\end{pmatrix}
\begin{pmatrix} a & \\\bar{a}^{-1}c& \bar{a}^{-1}\end{pmatrix},-1\right)du\\%line 2
&=\int_F f_s\left(
\begin{pmatrix} u^{-1}a & \\a+uc\bar{a}^{-1}& u\bar{a}^{-1}\end{pmatrix},-1\right)du.%line 3
\end{align*}
We note that $u = \bar{u}$. Hence
\begin{align*}%%%%%%%%%%%%%%
[M(\tau,s)f_s](b,1)&=\int_F f_s\left(
\begin{pmatrix} u^{-1}a & \\a+uc\bar{a}^{-1}& \bar{ua}^{-1}\end{pmatrix},-1\right)du\\%line 1
&=\int_F f_s\left(
\begin{pmatrix} u^{-1}a & \\& (\bar{u^{-1}a})^{-1}\end{pmatrix}
\begin{pmatrix} 1 & \\u^{-1}\bar{a}a + c & 1\end{pmatrix},-1\right)du\\%line 2
&=\int_F |u^{-1}|_E^{s} f_s\left(
\begin{pmatrix} 1 & \\u^{-1}\bar{a}a + c & 1\end{pmatrix},-u^{-1}a\right)du\\%line 3
&=\int_F |u^{-1}|_E^{s} \tau(u^{-1}a) f_s\left(
\begin{pmatrix} 1 & \\u^{-1}\bar{a}a + c& 1\end{pmatrix},-1\right)du.
\end{align*}%%%%%%%%%%%%%%%%
Since $\tau$ is tamely ramified, $\tau(a) = 1$. Hence
\begin{align*}%%%%%%%%%%%%%%
[M(\tau,s)f_s](b,1) &= \int_F |u|_E^{-s} \tau(u^{-1}) f_s\left(
\begin{pmatrix} 1 & \\u^{-1}\bar{a}a + c& 1\end{pmatrix},-1\right)du\\
&=\vol_F(\calO_F^\times) \int_{F^\times}|u|_E^{-s+\frac{1}{2}} \tau(u^{-1}) f_s\left(
\begin{pmatrix} 1 & \\u^{-1}\bar{a}a + c& 1\end{pmatrix},-1\right)d^\times u.
\end{align*}%%%%%%%%%%%%%%%
Here we recall that $d^\times u = \vol_F(\calO_F^\times)^{-1} \frac{d u}{|u|_F}$.
\end{proof}

\subsection{Computation of the gamma factor}\label{comp}
%In this section, we will compute the Rankin-Selberg gamma factor. We note that it coincides with the gamma factor $\gamma^{\text{Sh}}$ defined by Shahidi \cite{Sh3}.
Let $\tau$ a tamely ramified unitary quasi-character of $E^\times$ and $\pi = \pi_{\omega^1,1}$ a simple supercuspidal representation. We assume $\Upsilon$ is tamely ramified.
Let
\[
W_0(g) = 
\begin{cases}
    \psi_{N_l,-1}(u)\omega^1([z])\chi_1(y) & g = uzy, u \in N_l, z\in Z, y \in I^+\\
    0 & \text{otherwise},
\end{cases}
\]
in $\mathcal{W}(\pi,\psi_{N_l, -1})$ and take $W = W_0^\iota$ in $\mathcal{W}(\pi,\psi^{-1}_{N_l})$. Let
\[
f_s(g,a)=
\begin{cases}
    |m|_E^s\tau(am), & g = \begin{pmatrix} m & \\ &m^* \end{pmatrix} uy \in TN_1 I^{++}, a \in E^\times\\
    0, & \text{otherwise}.
\end{cases}
\]
in $V(\tau,s)$.
Let $\phi \in \mathcal{S}(E)$ be the characteristic function of $\mathfrak{p}_E$.
%For convenience, we may write $U = U_{G}$ when the group $G$ is clear, as well as for other subgroups, such as $B=B_G$. 

%We introduce the following lemma.
\begin{lem}\label{entriesinp}
Let $b$ be as that in (\ref{b=ac}) and assume $^{\iota w_{{l-1},1}}(rxb) \in N_lZI^+$. We have $a \in 1+ \mathfrak{p}_E$, $x \in \mathfrak{p}_E$, $c \in \mathfrak{p}_F$ and the coordinates of $r$ belong in $\mathfrak{p}_E$. Consequently, we have $^{\iota w_{{l-1},1}}(rxb) \in I^+$.
\end{lem}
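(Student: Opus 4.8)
The plan is to reduce the lemma to an explicit matrix computation, the one genuinely delicate point being handled by the observation that the last row of $M:={}^{\iota w_{l-1,1}}(rxb)$ is insensitive to the (non-compact) unipotent factor in the decomposition $N_lZI^+$. First I would record the locations of the data. The embedded $\u_2$ containing $b$ sits in the two central coordinates $l,l+1$ of $\u_{2l}$ (where $J_{2l}$ restricts to $J_2$); since $r,x$ are unipotent, $rxb$ then has the same diagonal as $b$ — all $1$'s except $a$ at $(l,l)$ and $\bar{a}^{-1}$ at $(l+1,l+1)$ — and has $\bar{a}^{-1}c$ at position $(l+1,l)$. Conjugation by $\iota w_{l-1,1}$ permutes the coordinates (up to the signs in $\iota$); because $e_{2l}^{t}w_{l-1,1}^{-1}=e_{l+1}^{t}$ and the $(l+1)$-st diagonal entry of $\iota$ is $1$, the last row of $M$ equals (row $l+1$ of $rxb$)$\cdot\,\iota w_{l-1,1}$. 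Computing this, row $2l$ of $M$ has $\bar{a}^{-1}c$ in the corner position $(2l,1)$, $\bar{a}^{-1}$ in the diagonal position $(2l,2l)$, unit multiples of $\sigma(x)$ and of the coordinates of $r$ (which enter through $r'={}^t\sigma(r)$) in strictly lower-triangular positions of that row, and $0$ in the remaining columns.

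Now I would invoke the hypothesis $M\in N_lZI^+$: write $M=uzk$ with $u\in N_l$, $z=\lambda I_{2l}\in Z$ and $k\in I^+$. The last row of every element of $N_l$ is $e_{2l}^{t}$, hence row $2l$ of $M$ equals $\lambda$ times row $2l$ of $k$; since $k\in I^+$, its $(2l,2l)$ entry lies in $1+\mathfrak{p}_E$ while its other entries in row $2l$ — notably the ``affine'' entry in position $(2l,1)$ — lie in $\mathfrak{p}_E$. Comparing with the previous paragraph gives at once $\bar{a}^{-1}\in\calO_E^\times$ (so $a\in\calO_E^\times$), $\bar{a}^{-1}c\in\mathfrak{p}_E$ (so $c\in\mathfrak{p}_E\cap F=\mathfrak{p}_F$, $E/F$ being unramified), $\sigma(x)\in\mathfrak{p}_E$ (so $x\in\mathfrak{p}_E$), and all coordinates of $r'$, hence of $r$, in $\mathfrak{p}_E$.

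It remains to get $a\in 1+\mathfrak{p}_E$ and the last assertion. With the above, every entry of $b$, $x$ and $r$ lies in $\calO_E$, so $rxb\in\U_{2l}(\calO_E)$, and since $\iota$ and $w_{l-1,1}$ have entries in $\{0,\pm1\}$, also $M\in\U_{2l}(\calO_E)$. Then $uz=Mk^{-1}$ is an upper triangular element of $\U_{2l}(\calO_E)$ with every diagonal entry equal to $\lambda$, so $\lambda$ and $\lambda^{-1}$ both lie in $\calO_E$, i.e.\ $\lambda\in\calO_E^\times$; hence $z$ and $u=(uz)z^{-1}$ also lie in $\U_{2l}(\calO_E)$. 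Reducing $M=uzk$ modulo $\mathfrak{p}_E$: the reductions of $u$ and of $k$ are upper-triangular unipotent matrices over $k_E$ and that of $z$ is a scalar, so the reduction of $M$ is a scalar times an upper-triangular unipotent matrix; but the reduction of $rxb$, hence of $M$, is diagonal (as $r\equiv x\equiv I_{2l}$ and $c\equiv 0$ modulo $\mathfrak{p}_E$), so it is the scalar $\lambda\bmod\mathfrak{p}_E$. Its diagonal equals $(1,\dots,1)$ up to permutation except for the reductions of $a$ and $\bar{a}^{-1}$ (here $2l\ge 4$; the case $l=1$ is treated separately, cf.\ the Remark), so $\lambda\equiv 1$, and then $a\equiv 1$, that is, $a\in 1+\mathfrak{p}_E$. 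Finally $\lambda\in 1+\mathfrak{p}_E$ puts $z$ in $I^+$, an upper-unipotent matrix in $\U_{2l}(\calO_E)$ lies in $I^+$, and $k\in I^+$, so $M=uzk\in I^+$, as claimed.

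The step I expect to be the main obstacle is not any individual calculation but the non-compactness of $N_l$: from $M\in N_lZI^+$ one cannot directly read congruence conditions on the entries of $M$, because the unipotent factor can be arbitrarily large. The remedy is to deduce every needed congruence from the single row on which that factor acts trivially; this forces $rxb$ to be integral, which makes the unipotent and central factors bounded, after which a reduction modulo $\mathfrak{p}_E$ pins down $a$ and finishes the proof.
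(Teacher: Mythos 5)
Your proof is correct, and its driving idea --- read the congruence conditions off the last row of $M := {}^{\iota w_{l-1,1}}(rxb)$, since that row is insensitive to the unipotent factor $u \in N_l$ --- is precisely the one the paper uses. Where you diverge is in how the central factor $z = \lambda I_{2l}$ gets eliminated. The paper first shows $N_lZI^+ = \coprod_{\lambda \in \calO_E^\times/(1+\frakp_E)} N_l\,\lambda\,I^+$ and then, writing $M = uzy$, compares the $(2,2)$-entries of $u^{-1}M = zy$: since column $2$ of $M$ is $e_2$, this forces $\lambda\, y_{2,2} = 1$, hence $\lambda \in 1+\frakp_E$ \emph{up front}, so $M \in N_l I^+$ and the last-row comparison then yields every conclusion at once, including $a \in 1+\frakp_E$. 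Your route extracts only $a \in \calO_E^\times$ from the last row (because $\lambda$ is still only constrained to $E^1 \subset \calO_E^\times$ at that stage), and recovers $\lambda \equiv 1$, hence $a \equiv 1$, by a second pass: you first deduce that $rxb$, hence $M$, $uz$, and $u$, all lie in $\U_{2l}(\calO_E)$, and then reduce $M = uzk$ modulo $\frakp_E$ and compare diagonals. Both arguments are sound; the paper's $(2,2)$-entry trick is a bit tighter because it pins down $\lambda$ before any other entry needs inspecting, whereas your version postpones this and pays with the extra integrality-plus-reduction step. One small caution: your qualitative description of the last row should be checked against the explicit formula \eqref{formula:w.rxb} in the paper's proof, which records the exact unit coefficients (signs, and for some entries a factor of $a$); these do not affect your conclusions, since you secure $a \in \calO_E^\times$ from the $(2l,2l)$-entry before using the other entries, but they are worth writing out.
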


\begin{proof}
By straight forward computation, $^{\iota w_{{l-1},1}}(rxb)
=$
% \begin{equation}\label{formula:w.rxb}
% ^{w_{{l-1},1}}(rxb) = \begin{pmatrix}
%     a &&&&&\\
%     ar &I_{l-2} &&&&\\
%     ax &&1&&&\\
%     &&&1&&\\
%     &&&&I_{l-2}&\\
%     \bar{a}^{-1}c & & &x^\prime & r^\prime & \bar{a}^{-1}
% \end{pmatrix}
% \end{equation}
% for all $l \geq 2$.
\iffalse
\begin{equation}\label{formula:w.rxb}
^{\iota w_{{l-1},1}}(rxb) = \begin{pmatrix}
    a &&&&&&&&&\\
    (-2)ar_1 & 1 &&&&&&&&\\
    (-2)^{2}ar_2 &&1&&&&&&&\\
    \ldots &&&\ldots&&&&&&\\
    (-2)^{l-2} ar_{l-2} &&&&&&&&&\\
    (-2)^{l-1}ax &&&&&&1&&&\\
    \ldots &&&&&&&\ldots&&\\
    &&&&&&&&1&\\
    2^{2l-1}\bar{a}^{-1}c &&& & &(-2)^{l-1} \sigma(x) &(-2)^{l-2} a\sigma(r_{l-2}) & \ldots & (-2) a\sigma(r_{1}) & \bar{a}^{-1}
\end{pmatrix}
\end{equation}
\fi

\begin{equation}\label{formula:w.rxb}
\begin{pmatrix}
    a &&&&&&&&\\
    (-1)ar_1 & 1 &&&&&&&\\
    \vdots &&\ddots&&&&&&\\
    (-1)^{l-2} ar_{l-2} &&&1&&&&&\\
    (-1)^{l-1}ax &&&&1&&&&\\
    0 &&&&&1&&&\\
    \vdots &&&&&&\ddots&&\\
    0 &&&&&&&1&\\
    \bar{a}^{-1}c &0&\dots & 0 &(-1)^{l-1} \sigma(x) & (-1)^{l-2} a \sigma(r_{l-2}) &  \dots & (-1) a\sigma(r_{1}) & \bar{a}^{-1}
\end{pmatrix}
\end{equation}
for all $l \geq 2$. 

We first note that $N_lZI^+$ is a disjoint union
$$N_lZI^+ = \coprod_{z \in \mathcal{O}_E^\times/1+\mathfrak{p}_E} N_lzI^+.$$
Indeed, it is obvious $N_lZI^+$ is the union. To see the sets $N_lzI^+$ and $N_lz^\prime I^+$ are disjoint, we simply take two arbitrary elements from each and compare their $(2l,2l)$-entries.
We then observe that if $^{w_{l-1,1}}(rxb)$ is in $N_lZI^+$, then it must lie in $N_lI^+$. Indeed, we can let $^{w_{l-1,1}}(rxb) = uzy$ and compare the $(2,2)$-entries of $u^{-1}(^{w_{l-1,1}}(rxb))$ and $zy$. The comparison of the last rows of $^{w_{l-1,1}}(rxb)$ and $uy$ completes the proof of the first statement. 
%Here we note that $x^\prime = \bar{x}$ and $r^\prime = ~^t \bar{r}$.
We note that the second statement follows from the first one, by the explicit description of $I^+$.
\end{proof}

%\begin{lem}\label{lem:r-sint}
%Let $W,f_s$ and $\phi$ be as above. Let $t \geq 1$ be the level of $\psi_E$. The Rankin-Selberg integral
%\[
%\mathcal{L}(W,f_s,\phi) = 
%\vol_E(\calO_E)\vol_E(\frakp_E)^l\vol(\frakp_F^2) C_\Upsilon
%\]
%where we denote $C_\Upsilon = \int_{1+\frakp_E}\Upsilon^{-1}(-a)d^\times a$.
%\end{lem}

\begin{lem}\label{lem:r-sint}
Let $W,f_s$ and $\phi$ be as above. The Rankin-Selberg integral
\[
\mathcal{L}(W,f_s,\phi) = 
\vol_E(\calO_E) \vol_E(\frakp_E)^l\vol(\frakp_F^2) \volx_E(1+\frakp_E).
\]
\end{lem}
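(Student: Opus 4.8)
The plan is to evaluate the defining triple integral
\[
\mathcal{L}(W,f_s,\phi)=\int_{N_1\mo\u_2}\int_{R^{l,1}}\int_{X_1}W\bigl({}^{w_{l-1,1}}(rxg)\bigr)\,f_s(g)\,[\omega_{\psi^{-1},\Upsilon^{-1}}(g)\phi](x)\,dx\,dr\,dg
\]
directly, for the explicit data $W=W_0^\iota$, $f_s$ and $\phi=\one_{\frakp_E}$ fixed above, by successively shrinking the domain of integration until the integrand is constant. First I would pass to the opposite Borel: the big cell $N_1B_1^-$ is open and dense in $\u_2$, so $N_1\mo\u_2$ is parametrised up to a null set by $B_1^-$. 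Writing its elements as $b=\mathrm{diag}(a,\sigma(a)^{-1})\pmat{1}{}{c}{1}$ as in \eqref{b=ac} with $a\in E^\times$ and $c\in F$, and equipping $N_1\mo\u_2$ with the invariant measure which restricts near the identity to $d^\times a\,dc$ (the modular factor is an unramified character on the torus and is trivial on the whole region that will survive, so its precise form off the identity is irrelevant), one gets
\[
\mathcal{L}(W,f_s,\phi)=\int\int_{R^{l,1}}\int_{X_1}W\bigl({}^{w_{l-1,1}}(rxb)\bigr)\,f_s(b,1)\,[\omega_{\psi^{-1},\Upsilon^{-1}}(b)\phi](x)\,dx\,dr\,d^\times a\,dc.
\]

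Next I would localise the domain. Since $W=W_0^\iota$ and $\iota\in T_l$ has entries $\pm1$, it normalises $N_l$, $Z$, $I^+$ and $I^{++}$ and commutes with $w_{l-1,1}$ up to such a torus element; hence $W\bigl({}^{w_{l-1,1}}(rxb)\bigr)$ is the value of $W_0$ at an element of $N_lZI^+$ exactly when ${}^{\iota w_{l-1,1}}(rxb)\in N_lZI^+$, and then Lemma~\ref{entriesinp} applies verbatim. It forces $a\in1+\frakp_E$, $x\in\frakp_E$, all coordinates of $r$ in $\frakp_E$ and $c\in\frakp_F$, and moreover ${}^{\iota w_{l-1,1}}(rxb)\in I^+$, so the central component is trivial. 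On top of this $f_s(b,1)\neq0$ forces $b\in TN_1I^{++}$, and with $a\in1+\frakp_E$ this is equivalent to $c$ lying in the lower-left congruence ideal of $I^{++}$, namely $\frakp_F^2$. So the effective region of integration is the box
\[
\{a\in1+\frakp_E\}\times\{c\in\frakp_F^2\}\times\{x\in\frakp_E\}\times\{r:\text{coordinates in }\frakp_E\}.
\]

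On this box I would evaluate the three factors. For the Whittaker factor, ${}^{\iota w_{l-1,1}}(rxb)\in I^+$ with trivial central part, so $W_0$ of it equals $\chi_{\omega^1,1}$ of it, i.e., $\chi_b$ with $b=1$; but from the explicit matrix \eqref{formula:w.rxb} every strictly-upper-triangular entry vanishes, so the coordinates $y_{1,2},\dots,y_{l-1,l},y_{l,l+1}$ are all $0$, while $y_{2l,1}=\sigma(a)^{-1}c\in\frakp_F^2$ is trivial modulo $I^{++}$; hence by \eqref{chib} the whole factor is $1$. For the section, the transformation law of $V(\tau,s)$ gives $f_s(b,1)=|a|_E^{s}\tau(a)=1$, since $|a|_E=1$ and $\tau$ is tamely ramified. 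For the Weil factor, Lemma~\ref{weilb} gives
\[
[\omega_{\psi^{-1},\Upsilon^{-1}}(b)\phi](x)=\Upsilon^{-1}(-a)|a|_E^{1/2}\int_E\psi_F\!\Bigl(-\tfrac12\,cy\sigma(y)\Bigr)\psi\bigl(ax\sigma(y)\bigr)\int_E\phi(z)\psi\bigl(-y\sigma(z)\bigr)\,dz\,dy,
\]
in which $\Upsilon^{-1}(-a)=1$ (as $\Upsilon(-1)=\omega_{E/F}(-1)=1$ and $\Upsilon$ is tame), $|a|_E=1$, the inner integral is $\vol_E(\frakp_E)\,\one_{\calO_E}(y)$ because $\psi$ has level one, and on $y\in\calO_E$ both remaining phases $\psi(ax\sigma(y))$ and $\psi_F(-\tfrac12 cy\sigma(y))$ are trivial (one uses $ax\in\frakp_E$, $c\in\frakp_F^2$ and the levels of $\psi,\psi_F$); thus $[\omega_{\psi^{-1},\Upsilon^{-1}}(b)\phi](x)=\vol_E(\frakp_E)\vol_E(\calO_E)$ identically. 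Finally, multiplying this constant by the volume of the box --- which is $\vol_E(\frakp_E)^{l-2}$ in the $r$-variables, $\vol_E(\frakp_E)$ in $x$, $\vol_F(\frakp_F^2)$ in $c$ and $\volx_E(1+\frakp_E)$ in $a$ --- gives $\vol_E(\calO_E)\vol_E(\frakp_E)^{l}\vol_F(\frakp_F^2)\volx_E(1+\frakp_E)$, which is the claim.

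The main obstacle, and where essentially all the work lies, is the localisation step together with the bookkeeping of the $2$'s inside it: one must pin down the exact congruence ideal ($\frakp_F^2$) into which $c$ is forced, which requires the explicit descriptions of $I^+$, $I^{++}$ and of the character $\chi_b$ in \eqref{chib}, and then check that the residual phases $\chi_b\bigl({}^{\iota w_{l-1,1}}(rxb)\bigr)$ and $\psi_F(-\tfrac12 cy\sigma(y))$ are genuinely trivial on the surviving box --- the point at which the dyadic case must be handled with care. By comparison the reduction to $B_1^-$, the triviality of the modular factor near the identity, and the two elementary integrals in the Weil factor are routine.
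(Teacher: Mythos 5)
Your proof is correct and follows essentially the same route as the paper: pass to the big cell $N_1B_1^-$, use the explicit matrix \eqref{formula:w.rxb} together with Lemma~\ref{entriesinp} to pin the domain to the box $a\in1+\frakp_E$, $c\in\frakp_F$, $x\in\frakp_E$, $r\in\frakp_E^{\oplus(l-2)}$, apply Lemma~\ref{weilb} to reduce the Weil factor to $\vol_E(\calO_E)\vol_E(\frakp_E)$, let the support of $f_s$ cut $c$ down to $\frakp_F^2$, and read off the remaining volume. The only cosmetic difference is that the paper records the Whittaker factor as $\psi_F(c\varpi^{-1})$ on $c\in\frakp_F$ and lets the support of $f_s$ kill it afterward, whereas you restrict to $c\in\frakp_F^2$ first and observe all three factors become constant there; also note that $\sigma(a)^{-1}c$ lies in $\frakp_E^2$ rather than $\frakp_F^2$ as you wrote, though the conclusion that its class in $I^+/I^{++}$ is trivial is unaffected.
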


\begin{proof}%%%%%%%Proof with FIXED level%%%%%%%
Rewrite the outer integral over $B^-$ as
\[
\int_{B^-}\int_{E^{\oplus l-2}}\int_{E} W(^{w_{l-1,1}}(rxb))[\omega_{\psi^{-1},\Upsilon^{-1}}(b)\phi](x) f_s(b,1) \delta_B(b) dx dr db.
\]
By the definitions of $W$ and $W_0$, \eqref{chib} and \eqref{formula:w.rxb},
\[
W(^{w_{l-1,1}}(rxb)) =
\begin{cases}
\psi_F(c\varpi^{-1}) & \text{ if } x \in \frakp_E, r \in \frakp_E^{\oplus(l-2)}, a \in 1+ \frakp_E, c \in \frakp_F\\
0 & \text{ otherwise}
\end{cases}.
\]
Hence the integral is
\[
\int_{S}\int_{\mathfrak{p}_E^{\oplus l-2}}\int_{\mathfrak{p}_E} \psi_F(c\varpi^{-1})[\omega_{\psi^{-1},\Upsilon^{-1}}(b)\phi](x) f_s(b,1) dx dr db
\]
where
$S=\left\{\begin{pmatrix} a & 0\\ \bar{a}^{-1}c & \bar{a}^{-1} \end{pmatrix} \mid a\in 1+\mathfrak{p}_E,c\in \mathfrak{p}_F, c \neq 0\right\}$. Here we note that $c=0$ gives a measure zero subset in $S$.
Apply Lemma \ref{weilb}:
\begin{align*}
[\omega_{\psi^{-1},\Upsilon^{-1}}(b)\phi](x) = \Upsilon^{-1}(-a)\int_E \psi_F\left(-c y \bar{y}\right)\psi(ax\bar{y})\int_{\mathfrak{p}_E}\psi(-y\bar{z}) dz dy.
\end{align*}
The inner integral vanishes for $y \notin \calO_E$, so the above equals
\begin{align*}
\Upsilon^{-1}(-a)\int_{\calO_E} \psi_F\left(-c y \bar{y}\right)\psi(ax\bar{y})\int_{\mathfrak{p}_E}\psi(-y\bar{z}) dz dy.
\end{align*}
The integrand is the characteristic function and the above integral equals $\vol_E(\calO_E)\vol_E(\frakp_E)$.
Hence
\begin{align}
\mathcal{L}(W,f_s,\phi) &= \int_{S}\int_{\mathfrak{p}_E^{\oplus l-2}}\int_{\mathfrak{p}_E} \psi_F(c\varpi^{-1})\Upsilon^{-1}(-a) \vol_E(\calO_E) \vol_E(\frakp_E) f_s(b,1) dx dr db \nonumber\\
&= \vol_E(\calO_E) \vol_E(\frakp_E)^l \int_S \psi_F(c\varpi^{-1}) \Upsilon^{-1}(-a) f_s(b,1) db.\label{comp:R-Sint}
\end{align}
Lastly, notice that when $a \in 1+\mathfrak{p}_E$ and $c \in \frakp_F$,
\[
f_s(b,1)=
\begin{cases}
    1, & c \in \mathfrak{p}_F^2,\\
    0, & \text{otherwise}.
\end{cases}
\]
\end{proof}%%%%%%%Proof with FIXED level%%%%%%%

\begin{lem}\label{lem:int.act}
% Let $c \in \frakp_F$. When $\tauF$ is unramified,
% \[
% [M(\tau,s)f_s] \left(\begin{pmatrix} 1 & \\ c & 1 \end{pmatrix},1\right) = 
% \begin{cases}
% q_F^{-2s+\frac{1}{2}} \tau(\varpi) (q_F-1)(L(2s-1,\tau|_{F^\times})-1) & \text{ if } c \in \frakp_F^2,\\
% {q_F^{-2s+\frac{1}{2}}}\tau(c) & \text{ if } c \in \varpi \calO_F^\times.
% \end{cases}
% \]
% When $\tauF$ is ramified,
% \[
% [M(\tau,s)f_s] \left(\begin{pmatrix} 1 & \\ c & 1 \end{pmatrix},1\right) = 
% \begin{cases}
% 0 & \text{ if } c \in \frakp_F^2,\\
% {q_F^{-2s+\frac{1}{2}}}\tau(c) & \text{ if } c \in \varpi \calO_F^\times.
% \end{cases}
% \]

For $c \in \frakp_F^2$,
\[
[M(\tau,s)f_s] \left(\begin{pmatrix} 1 & \\ c & 1 \end{pmatrix},1\right) = 
\begin{cases}
q_F^{-2s+\frac{1}{2}} \tau(\varpi) (q_F-1)(L(2s-1,\tau|_{F^\times})-1) & \text{ if } \tau \text{ is unramified,}\\
0 & \text{ if } \tau \text{ is ramified. } 
\end{cases}
\]
For $c \in \varpi \calO_F^\times$,
\[
[M(\tau,s)f_s] \left(\begin{pmatrix} 1 & \\ c & 1 \end{pmatrix},1\right) = 
{q_F^{-2s+\frac{1}{2}}}\tau(c).
\]
\end{lem}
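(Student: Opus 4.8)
The plan is to deduce both values from Lemma~\ref{lem:Mtausfsb1} and a short Tate-type computation; the only delicate point is the normalization of Haar measures. First I would apply Lemma~\ref{lem:Mtausfsb1} with $a=1$ and $b=\pmat{1}{}{c}{1}$ --- legitimate since $c\neq 0$ in both cases --- which, as $\bar 1\cdot 1=1$, gives
\[
[M(\tau,s)f_s]\left(\pmat{1}{}{c}{1},1\right)=\vol_F(\calO_F^\times)\int_{F^\times}|u|_E^{-s+\frac12}\,\tau(u^{-1})\,f_s\left(\pmat{1}{}{u^{-1}+c}{1},-1\right)d^\times u .
\]
Next I would pin down the integrand: by the same entry comparison as in the proof of Lemma~\ref{lem:r-sint} (the $(2,2)$-entry forces the $\GL_1$-part into $1+\frakp_E$, and then the $(2,1)$-entry), the matrix $\pmat{1}{}{w}{1}$ lies in $TN_1I^{++}$ exactly when $w\in\frakp_F^2$, in which case its $\GL_1$-part lies in $1+\frakp_E$ so that $f_s\!\left(\pmat{1}{}{w}{1},-1\right)=\tau(-1)$; it vanishes otherwise. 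Hence the computation reduces to evaluating
\[
\tau(-1)\,\vol_F(\calO_F^\times)\int_{\{u\in F^\times\,:\,u^{-1}+c\in\frakp_F^2\}}|u|_E^{-s+\frac12}\,\tau(u^{-1})\,d^\times u
\]
in the two regimes.

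For $c\in\frakp_F^2$ the domain is $\{u:u^{-1}\in\frakp_F^2\}=\coprod_{k\geq2}\varpi^{-k}\calO_F^\times$; on the $k$-th shell $|u|_E^{-s+\frac12}=q_F^{k(1-2s)}$ and $\tau(u^{-1})=\tau(\varpi)^k$ times $\tau$ of a unit, while $\int_{\calO_F^\times}\tau(u^{-1})\,d^\times u$ is $1$ for $\tau$ unramified and $0$ otherwise. With $X=\tau(\varpi)q_F^{1-2s}$ one sums $\sum_{k\geq2}X^k=X^2/(1-X)=X\bigl(L(2s-1,\tau|_{F^\times})-1\bigr)$ (using $L(2s-1,\tau|_{F^\times})=(1-X)^{-1}$, valid for $\mathrm{Re}\,s$ large and then by analytic continuation), and together with $\tau(-1)=1$ for unramified $\tau$ this gives the first formula. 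For $c\in\varpi\calO_F^\times$ the condition $u^{-1}\in-c+\frakp_F^2$ confines $u$ to the preimage under inversion of the single coset $-c+\frakp_F^2\subset\varpi\calO_F^\times$; on it $|u|_E^{-s+\frac12}=q_F^{1-2s}$ and $\tau(u^{-1})=\tau(-c)$ since $\tau$ is trivial on $1+\frakp_F$, and the $d^\times$-volume of that set equals $\volx_F(-c+\frakp_F^2)=(q_F-1)^{-1}$, so the integral is $q_F^{1-2s}\tau(-c)(q_F-1)^{-1}$; multiplying by $\tau(-1)\vol_F(\calO_F^\times)$ and using $\tau(-1)\tau(-c)=\tau(c)$ yields $q_F^{-2s+\frac12}\tau(c)$.

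The step to watch --- and the origin of the $q_F^{1/2}$ in both answers --- is the measure: since $\psi_F$ has level one (trivial on $\frakp_F$, nontrivial on $\calO_F$), its self-dual additive measure has $\vol_F(\calO_F)=q_F^{1/2}$, so the prefactor of Lemma~\ref{lem:Mtausfsb1} is $\vol_F(\calO_F^\times)=q_F^{1/2}(1-q_F^{-1})=q_F^{-1/2}(q_F-1)$, not $1-q_F^{-1}$; the other volumes enter only through the scaling-independent ratio $\vol_F(\frakp_F^2)/\vol_F(\calO_F^\times)=q_F^{-1}/(q_F-1)$. The rest is the bookkeeping of the shell decomposition and recognizing the geometric series as the shifted Tate $L$-factor; the boundary value $c=0$ allowed by "$c\in\frakp_F^2$" is harmless --- a null set in the eventual application (or handled by the same computation, the first formula being independent of $c\in\frakp_F^2$).
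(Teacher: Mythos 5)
Your proof is correct and follows essentially the same route as the paper: apply Lemma~\ref{lem:Mtausfsb1} with $a=1$, identify the support of $f_s\bigl(\pmat{1}{}{w}{1},-1\bigr)$ as $w\in\frakp_F^2$, and then evaluate the resulting $F^\times$-integral by a shell decomposition (first case) or by reducing to a single coset of $1+\frakp_F$ (second case), with the same care about the self-dual volume $\vol_F(\calO_F)=q_F^{1/2}$. The only cosmetic difference is that the paper performs an explicit change of variables $u\mapsto u^{-1}$ (and, in the second case, $u^{-1}=-u_0c$) before summing, whereas you read off the shell contributions and the coset volume directly; the arithmetic is identical.
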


\begin{proof}
By Lemma \ref{lem:Mtausfsb1}, we have that
\begin{equation}\label{temp1}
\mathrm{LHS} = \vol(\calO_F^\times) \int_{F^\times}|u|_E^{-s+\frac{1}{2}} \tau(u^{-1}) f_s\left(
\begin{pmatrix} 1 & \\u^{-1} + c& 1\end{pmatrix},-1\right)d^\times u.
\end{equation}

When $c \in \frakp_F^2$, 
\[
f_s\left(\pmat{1}{}{u^{-1}+c}{1},-1\right)=f_s\left(\pmat{1}{}{u^{-1}}{1},-1\right)=
\begin{cases}
0, &u^{-1} \notin \frakp_F^2,\\
\tau(-1), &u^{-1} \in \frakp_F^2.
\end{cases}
\]
The integral in the above formula \eqref{temp1} is
\begin{align*}
\vol(\calO_F^\times)^{-1}\times\mathrm{LHS}
&=\int_{\{u|u^{-1} \in \frakp_F^2\}}|u^{-1}|_E^{s-\frac{1}{2}} \tau(-u^{-1}) d^\times u\\
&=\int_{\frakp_F^2}|u|_E^{s-\frac{1}{2}} \tau(-u) d^\times u\\
&=\sum_{n=2}^\infty \int_{\calO_F^\times} |\varpi^n u|_E^{s-\frac{1}{2}} \tau(-\varpi^n u) d^\times u\\
&=\sum_{n=2}^\infty (q_E^{-s+\frac{1}{2}} \tau(\varpi))^n \int_{\calO_F^\times}  \tau(-u) d^\times u.
\end{align*}
We notice that the last integral equals $0$ when $\tauF$ is ramified and $1$ when $\tauF$ is unramified.

When $c \in \varpi \calO_F^\times$, by our choice of the section $f_s$,
\[
f_s\left(\pmat{1}{}{u^{-1}+c}{1},-1\right)=
\begin{cases}
0, &\text{when }u^{-1}+c \notin \frakp_F^2,\\
\tau(-1), &\text{when }u^{-1}+c \in \frakp_F^2.
\end{cases}
\]
Hence \eqref{temp1} is
\begin{align*}
\vol(\calO_F^\times) \int_{\{u \in F^\times| u^{-1} + c \in \frakp_F^2\} }|u^{-1}|_E^{s-\frac{1}{2}}\tau(-u^{-1}) d^\times u.
\end{align*}
Apply the change of variable $u^{-1} = -u_0c$. We have
\[
\vol(\calO_F^\times) q_E^{-s+\frac{1}{2}}\tau(c)\int_{1+\frakp_F} \tau(u_0) d^\times u_0.
\]
Since $\tau$ is tamely ramified, we have
\begin{align*}
q_F^{-2s+\frac{1}{2}}\tau(c),
\end{align*}
where we applied $\vol(\calO_F^\times) = q_F^{\frac{1}{2}}-q_F^{-\frac{1}{2}}$ and $\vol^\times(1+\frakp_F) = (q_F-1)^{-1}$.
\end{proof}

\begin{lem}\label{lem:r-sint-twist}
$\mathcal{L}(W,M(\tau,s)f_s,\phi)$ equals
\begin{enumerate}
\item[(i)] when $\tauF$ {is unramified},
\[
q_F^{-2s+\frac{1}{2}}\tau(\varpi)(q_F-1)\left(L(2s-1,\tauF)-1-\frac{1}{q_F-1}\right) {\mathcal{L}(W,f_s,\phi)};
\]
\item[(ii)] when $\tauF$ {is ramified},
\[
q_F^{-2s+\frac{1}{2}}\tau(\varpi) G(\psi_F,\tauF){\mathcal{L}(W,f_s,\phi)}.
\]
\end{enumerate}

\end{lem}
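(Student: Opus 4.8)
First I would repeat the unfolding of $\mathcal{L}(W,\,\cdot\,,\phi)$ carried out in the proof of Lemma~\ref{lem:r-sint}, replacing the section $f_s$ everywhere by $M(\tau,s)f_s$. Since neither the Whittaker function $W$ nor the Weil-representation factor $[\omega_{\psi^{-1},\Upsilon^{-1}}(b)\phi](x)$ involves the inducing section, the support analysis (Lemma~\ref{entriesinp}) and the evaluation of all the inner integrals go through word for word; in particular $\delta_B\equiv1$ and $\Upsilon^{-1}(-a)=\Upsilon^{-1}(-1)=1$ on the relevant range, the latter because $E/F$ is unramified and $\Upsilon$ is tamely ramified. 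This reduces the claim to the identity
\[
\mathcal{L}(W,M(\tau,s)f_s,\phi)=\vol_E(\calO_E)\,\vol_E(\frakp_E)^l\int_{S}\psi_F(c\varpi^{-1})\,[M(\tau,s)f_s](b,1)\,db,
\]
where $b=\begin{pmatrix}a&\\\bar{a}^{-1}c&\bar{a}^{-1}\end{pmatrix}$ and $S=\{\,b:a\in1+\frakp_E,\ c\in\frakp_F,\ c\neq0\,\}$, exactly as in Lemma~\ref{lem:r-sint}. The one difference is that here the $c$-integral no longer collapses onto $\frakp_F^2$, since $M(\tau,s)f_s$ need not vanish at $c\in\varpi\calO_F^\times$.

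Next I would evaluate $[M(\tau,s)f_s](b,1)$ on $S$. By Lemma~\ref{lem:Mtausfsb1} this quantity depends on $a$ only through $N_{E/F}(a)=\bar{a}a\in1+\frakp_F$, and the change of variables $u\mapsto u\,N_{E/F}(a)$ in the integral there, together with $|N_{E/F}(a)|_F=1$ and $\tau(N_{E/F}(a))=1$ (tame ramification of $\tau$), shows $[M(\tau,s)f_s](b,1)=[M(\tau,s)f_s]\bigl(\begin{pmatrix}1&\\c&1\end{pmatrix},1\bigr)$, which is computed in Lemma~\ref{lem:int.act}. I then split the integral over $\frakp_F\setminus\{0\}$ into the pieces $c\in\frakp_F^2$ and $c\in\varpi\calO_F^\times$, substitute the two explicit formulas, and note that the $a$-integral merely supplies a factor $\volx_E(1+\frakp_E)$.

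On $\frakp_F^2$ one has $\psi_F(c\varpi^{-1})=1$, so this piece produces $\vol(\frakp_F^2)$ times the relevant constant of Lemma~\ref{lem:int.act}, and recombining with the other volume constants into $\mathcal{L}(W,f_s,\phi)=\vol_E(\calO_E)\vol_E(\frakp_E)^l\vol(\frakp_F^2)\volx_E(1+\frakp_E)$ gives the summand $q_F^{-2s+\frac{1}{2}}\tau(\varpi)(q_F-1)\bigl(L(2s-1,\tauF)-1\bigr)\,\mathcal{L}(W,f_s,\phi)$ when $\tauF$ is unramified, and $0$ when $\tauF$ is ramified. On $\varpi\calO_F^\times$ the value is $q_F^{-2s+\frac{1}{2}}\tau(c)$, and substituting $c=\varpi u$ with $u\in\calO_F^\times$ turns the piece into $q_F^{-1}\tau(\varpi)\int_{\calO_F^\times}\psi_F(u)\tauF(u)\,du$ times the other constants: when $\tauF$ is ramified this is the Gauss-sum factor $G(\psi_F,\tauF)$ (in the normalization of the statement), yielding (ii); when $\tauF$ is unramified one uses that $\psi_F$ has level one, so $\int_{\calO_F}\psi_F(u)\,du=0$ and hence $\int_{\calO_F^\times}\psi_F(u)\,du=-\vol(\frakp_F)$, which makes this piece equal to $q_F^{-2s+\frac{1}{2}}\tau(\varpi)(q_F-1)\cdot\bigl(-\frac{1}{q_F-1}\bigr)\,\mathcal{L}(W,f_s,\phi)$, and adding the two pieces gives (i). The main thing to watch is the bookkeeping of the measure normalizations — so that every volume factor recombines cleanly into a multiple of $\mathcal{L}(W,f_s,\phi)$ — and the sign in $\int_{\calO_F^\times}\psi_F(u)\,du=-\vol(\frakp_F)$, which is precisely what produces the extra $-\frac{1}{q_F-1}$ in case (i).
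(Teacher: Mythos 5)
Your argument is correct and takes essentially the same route as the paper: unfold $\mathcal{L}$ as in Lemma~\ref{lem:r-sint} with $M(\tau,s)f_s$ in place of $f_s$, reduce away the $a$-dependence, split the $c$-integral into $\frakp_F^2$ and $\varpi\calO_F^\times$, and apply Lemma~\ref{lem:int.act}. The only cosmetic difference is in justifying that $[M(\tau,s)f_s](b,1)$ is independent of $a\in 1+\frakp_E$ — you change variables in the integral of Lemma~\ref{lem:Mtausfsb1}, while the paper factors $b$ and invokes $M(\tau,s)f_s\in V(\tau^*,1-s)$ together with tameness of $\tau$; both are valid.
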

\begin{proof}
Applying the definition of the Rankin-Selberg integral $\mathcal{L}(W,M(\tau,s)f_s,\phi)$, the same as formula \eqref{comp:R-Sint} but replacing $f_s$ by $M(\tau,s)f_s$, we have
\begin{equation}\label{aaa}
\vol(\frakp_E)^l\vol(\calO_E)\int_S \psi(c \varpi^{-1}) \Upsilon^{-1}(-a) [M(\tau,s)f_s](b,1) db.
\end{equation}
On $S$, the functions $[M(\tau,s)f_s](b,1)$ is constant when varying $a$, i.e.
\begin{equation*}
[M(\tau,s)f_s](b,1) = [M(\tau,s)f_s]\left(\pmat{1}{}{c}{1},1\right)
\end{equation*}
since $M(\tau,s)f_s \in V(\tau^*,1-s)$. Hence the integral over $S$ in \eqref{aaa} splits into a product of two integrals:
\[
\int_{1+\frakp_E} \Upsilon^{-1}(-a) d^\times a \int_{\frakp_F}\psi(c \varpi^{-1})[M(\tau,s)f_s]\left(\pmat{1}{}{c}{1},1\right) dc.
\]
We split the second integral into the following summation:
\begin{equation}\label{bbb}
\int_{\frakp_F^2}\psi_F(c \varpi^{-1})[M(\tau,s)f_s]\left(\pmat{1}{}{c}{1},1\right) dc + \int_{\frakp_F \setminus \frakp_F^2}\psi_F(c \varpi^{-1})[M(\tau,s)f_s]\left(\pmat{1}{}{c}{1},1\right)dc.
\end{equation}
When $c \in \frakp_F^2$, we note that $\psi(c\varpi^{-1}) = 1$. By Lemma \ref{lem:int.act}, the first part in \eqref{bbb} equals to
\[
A(\tau,s,\psi)\left(L\left(2s-1,\tau|_{F^\times}\right)-1\right)\vol(\frakp_F^2).
\]
where
\[
A(\tau,s,\psi) = \begin{cases} 
q_F^{-2s+\frac{1}{2}} \tau(\varpi) (q_F-1), &\tauF \text{ unramified},\\
0, &\tauF \text{ ramified}.
\end{cases}
\]
When $c \in \frakp_F \setminus \frakp_F^2 = \varpi \calO_F^\times$, we notice that $[M(\tau,s)f_s]\left(\pmat{1}{}{c}{1},1\right) = q_F^{-2s+1}\tau(c)$. The second part in \eqref{bbb} is hence
\begin{align*}
&\int_{\varpi \calO_F^\times} \psi_F(c\varpi^{-1})q_F^{-2s+\frac{1}{2}}\tau(c) dc\\
=& q_F^{-2s+\frac{1}{2}}(q_F^{\frac{1}{2}} - q_F^{-\frac{1}{2}})q_F^{-1} \int_{\varpi \calO_F^\times} \psi_F(c\varpi^{-1})\tau(c) d^\times c\\
=&q_F^{-2s+\frac{1}{2}}(q_F - 1) \vol(\frakp_F^2) \tau(\varpi) \int_{\calO_F^\times} \psi_F(c) \tau(c) d^\times c.
\end{align*}
The last integral is
\begin{align*}
\int_{\calO_F^\times} \psi_F(c) \tau(c) d^\times c &= \sum_{u \in  \calO_F^\times / 1 + \frakp_F} \int_{1 + \frakp_F} \psi_F(uc) \tau(uc) d^\times c \\
&= \sum_{u \in \calO_F^\times / 1 + \frakp_F} \psi_F(u)\tau(u)  \int_{1+ \frakp_F} d^\times c\\
&= (q_F-1)^{-1} G(\psi_F,\tauF).
\end{align*}
In particular, when  $\tauF$ is unramified, $G(\psi_F,\tauF) = -1$. 
\end{proof}

Let $\Upsilon$ and $\psi$ be as denoted in the beginning of this section and let $\gamma^{\text{Sh}}(s,\pi \times \tau, \psi)$ be the gamma factor defined by Shahidi \cite{Sh3}. We now state our main theorem.
\begin{thm}\label{main1}
Let $\pi = \pi_{\omega^1,b}$ and $\tau$ be a tamely ramified character of $E^\times$, then
\[
\gamma(s,\pi \times \tau,\Upsilon, \psi) = \omega^1(-1) \tau(-1)^{l} \tau(b^{-1}\varpi) q_F^{-2s+1},
\]
and equivalently
\[
\gamma^{\text{Sh}}(s,\pi \times \tau, \psi) = -\omega^1(-1) \tau(-1)^{l} \tau(b^{-1}\varpi) q_F^{-2s+1}.
\]

\end{thm}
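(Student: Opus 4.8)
The plan is to combine the two Rankin-Selberg integral computations established in Lemmas \ref{lem:r-sint} and \ref{lem:r-sint-twist} together with the explicit formula for the local coefficient $C(s,\tau,\psi)$ from the Proposition, and then feed the result through the definition \eqref{defn:norgam} of the normalized gamma factor. Concretely, recall that $\gamma_0(s,\tau,\Upsilon,\psi)$ is characterized by $\mathcal{L}^*(W,f_s,\phi) = \gamma_0(s,\tau,\Upsilon,\psi)\,\mathcal{L}(W,f_s,\phi)$, where $\mathcal{L}^*(W,f_s,\phi) = \mathcal{L}(W,M^*(\tau,s)f_s,\phi) = C(s,\tau,\psi)\,\mathcal{L}(W,M(\tau,s)f_s,\phi)$. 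Since the chosen data $W=W_0^\iota$, $f_s$, $\phi$ are such that $\mathcal{L}(W,f_s,\phi)\neq 0$ by Lemma \ref{lem:r-sint}, we may divide and obtain
\[
\gamma_0(s,\tau,\Upsilon,\psi) = C(s,\tau,\psi)\cdot\frac{\mathcal{L}(W,M(\tau,s)f_s,\phi)}{\mathcal{L}(W,f_s,\phi)},
\]
and the quotient on the right is exactly the scalar computed in Lemma \ref{lem:r-sint-twist}, namely $q_F^{-2s+1/2}\tau(\varpi)(q_F-1)\bigl(L(2s-1,\tauF)-1-\tfrac{1}{q_F-1}\bigr)$ in the unramified case and $q_F^{-2s+1/2}\tau(\varpi)G(\psi_F,\tauF)$ in the ramified case.

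Next I would substitute $C(s,\tau,\psi) = \tau(-1)\gamma(2s-1,\tauF,\psi_F)$ and simplify using Tate's local functional equation. In the unramified case $\gamma(2s-1,\tauF,\psi_F) = \frac{L(2-2s,\tauF^{-1})}{L(2s-1,\tauF)}$ up to an $\varepsilon$-factor which is trivial for an unramified character and level-one $\psi_F$; expanding $L(2s-1,\tauF) = (1-\tauF(\varpi)q_F^{-(2s-1)})^{-1}$ and combining with the factor $(q_F-1)(L(2s-1,\tauF)-1-\tfrac{1}{q_F-1})$ should collapse the $L$-factors and leave a clean monomial in $q_F^{-s}$ times $\tau(\varpi)$-type terms. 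In the ramified case $\gamma(2s-1,\tauF,\psi_F)$ is (up to the conductor normalization, which is trivial here since $\tau$ is tamely ramified and $\psi_F$ has level one) essentially the inverse Gauss sum, so the product $\gamma(2s-1,\tauF,\psi_F)\cdot G(\psi_F,\tauF)$ telescopes to a power of $q_F$. Both cases should yield $\gamma_0(s,\tau,\Upsilon,\psi) = q_F^{-2s+1}\tau(\varpi)$ (with the $\Upsilon$-dependence dropping out because $\tau$ and $\Upsilon$ are tamely ramified and the relevant $a$-integral $\int_{1+\frakp_E}\Upsilon^{-1}(-a)\,d^\times a$ contributes only a volume). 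Finally I would apply \eqref{defn:norgam}: $\gamma(s,\pi\times\tau,\Upsilon,\psi) = \omega_\pi(-1)\tau(-1)^l\gamma_0$, and identify $\omega_\pi(-1) = \omega^1(-1)$ from the parametrization in section \ref{SSC}, giving $\omega^1(-1)\tau(-1)^l q_F^{-2s+1}\tau(\varpi)$; the $b$-dependence, suppressed by working with $b=1$ throughout section \ref{comp}, is reinstated by the observation in section \ref{SSC} that $b$ and $\varpi$ enter only through $b\varpi^{-1}$, producing $\tau(b^{-1}\varpi)$. The equivalence with $\gamma^{\Sh}$ follows from the stated relation between Morimoto's normalized gamma factor and Shahidi's, which differ by a sign.

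The main obstacle I anticipate is the careful bookkeeping of normalization constants: reconciling the self-dual Haar measures appearing in the Weil representation with the measures $\volx_F(\calO_F^\times)=1$ and $du$ relative to $\psi_F$ used in the intertwining operator and the Whittaker functional, and making sure the various volume factors $\vol_E(\calO_E)$, $\vol_E(\frakp_E)^l$, $\vol(\frakp_F^2)$, $\volx_E(1+\frakp_E)$ from Lemma \ref{lem:r-sint} cancel correctly against those implicitly present in Lemma \ref{lem:r-sint-twist} when forming the ratio. A secondary subtlety is verifying that the two cases (ramified vs.\ unramified $\tauF$) really do produce the \emph{same} final answer — in particular that the extra $-1-\tfrac{1}{q_F-1}$ correction in the unramified case is precisely what is needed for the $L$-factor in $C(s,\tau,\psi)$ to cancel — and checking the behavior at the finitely many $s$ where intermediate $L$-factors have poles, so that the identity of rational functions in $q_F^{-s}$ holds on the nose. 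The separate treatment of the $l=1$ case flagged in the Remark would also need a brief parallel argument, but as noted there it is strictly simpler.
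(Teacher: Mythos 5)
Your overall strategy is the same as the paper's: compute $\gamma_0(s,\tau,\Upsilon,\psi) = C(s,\tau,\psi)\cdot\mathcal{L}(W,M(\tau,s)f_s,\phi)/\mathcal{L}(W,f_s,\phi)$ with the ratio read off from Lemma \ref{lem:r-sint-twist}, plug in the Proposition's formula for $C(s,\tau,\psi)$, simplify with Tate's local functional equation, apply \eqref{defn:norgam}, and recover general $b$ by the $b\varpi^{-1}$ observation. That part is sound.

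However, there is a substantive error in the unramified case: you claim that the $\varepsilon$-factor $\epsilon(2s-1,\tauF,\psi_F)$ is trivial because $\tauF$ is unramified and $\psi_F$ has level one. This is false. For a level-\emph{one} additive character (nontrivial on $\calO_F$, trivial on $\frakp_F$) the conductor exponent $n(\psi_F)=1$, so the $\varepsilon$-factor of an unramified character is \emph{not} $1$; as the paper records from \cite{BH06} (23.5), $\epsilon(2s-1,\tauF,\psi_F) = q_F^{2s-3/2}\tau^{-1}(\varpi)$. This factor is essential: after the $L$-factors from $C(s,\tau,\psi)$ cancel against the bracket $(q_F-1)\bigl(L(2s-1,\tauF)-1-\tfrac{1}{q_F-1}\bigr)$, one is left with $\tau(\varpi)^2 q_F^{5/2-4s}$ \emph{before} multiplying by $\epsilon(2s-1,\tauF,\psi_F)$; only the nontrivial $\varepsilon$-factor converts this into the correct $q_F^{-2s+1}\tau(\varpi)$. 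With $\varepsilon=1$ you would get a wrong power of $q_F^{-s}$ and a wrong power of $\tau(\varpi)$. The same caution applies to your parenthetical in the ramified case — the $q_F^{-1/2}$ coming from the level-one normalization of $\psi_F$ is needed there as well, not a triviality. (Your treatment of the $\Upsilon$-integral, of the $b$-dependence, and of the passage to $\gamma^{\mathrm{Sh}}$ via twisting by $\Upsilon^{-1}$ is fine in outline, though for $\gamma^{\mathrm{Sh}}$ the paper works through Corollary~1.1 of \cite{Mor23} applied to $\pi\otimes\Upsilon^{-1}$ and the values $\Upsilon(-1)=1$, $\Upsilon(\varpi)=-1$ rather than a blanket "differ by a sign.")
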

\begin{proof}
Let $b = 1$. We first note that $\omega^1$ is the central character of $\pi =\pi_{\omega^1,1}$. When $\tauF$ is unramified, $\tau(-1) = 1$. By Lemma \ref{lem:r-sint-twist} and \eqref{defn:norgam}, the Rankin-Selberg gamma factor $\gamma(s,\pi \times \tau, \Upsilon, \psi)$ is %\footnote{In this case .}
\[
\omega_\pi(-1) \times \gamma(2s-1,\tau|_{F^\times},\psi_F)  \times q_F^{-2s+\frac{1}{2}}\tau(\varpi)(q_F-1)\left(L(2s-1,\tauF)-1-\frac{1}{q_F-1}\right).
\]
For a level one additive character $\psi$ and unramified $\tauF$ [cf. \cite{BH06} (23.5)],
\[
\epsilon(2s-1,\tauF,\psi_F) = q_F^{2s-\frac{3}{2}} \tau^{-1}(\varpi).
\]
Direct computation shows that
\[
\gamma(s,\pi \times \tau, \psi) = \omega^1(-1) q_F^{-2s+1}\tau(\varpi).
\]

When $\tauF$ is ramified, the Rankin-Selberg gamma factor $\gamma(s,\pi \times \tau, \Upsilon, \psi)$ is
\[
\omega_\pi(-1)\tau(-1)^l \times \tau(-1)\epsilon(2s-1,\tau|_{F^\times},\psi_F) \times q_F^{-2s+\frac{1}{2}}\tau(\varpi)G(\psi,\tauF)
\]
For a level one additive character $\psi$ and tamely ramified ramified $\tauF$, we have [cf. \cite{BH06} (23.6)]
\[
\epsilon(2s-1,\tauF,\psi_F) =  q_F^{-\frac{1}{2}} G(\psi_F,\tauF^{-1}),
\]
and
\[
G(\psi_F,\tauF^{-1})G(\psi_F,\tauF) = \tau(-1) q_F.
\]
%Here we note that $\tauF$ is of level zero in loc. cit. 
Hence the gamma factor equals
\begin{align*}
%&\omega^1(-1) \tau(-1)^l \times \tau(-1)q_F^{-\frac{1}{2}} G(\psi,\tauF^{-1})  \times G(\psi,\tauF) \tau(\varpi)  q_F^{-2s+\frac{1}{2}}\\
%=&
\omega^1(-1) \tau(-1)^{l} \tau(\varpi) q_F^{-2s+1}.
\end{align*}
To see the formula for an arbitrary $b \in F^\times$, we simply notice that we can vary the uniformizer to $b^{-1} \varpi$ and the above computation will hold identically.

Applying Corollary 1.1 in \cite{Mor23} to $\pi \otimes \Upsilon^{-1}$ we get
\begin{equation*}
\gamma^{\text{Sh}}(s,\pi \times \tau, \psi) = \gamma(s,(\pi \otimes \Upsilon^{-1}) \times \tau, \Upsilon, \psi).
\end{equation*}
By the property of the Rankin-Selberg integral
\[
\gamma(s,(\pi \otimes \Upsilon^{-1}) \times \tau, \Upsilon, \psi) = 
\gamma(s,\pi  \times (\tau \otimes \Upsilon^{-1}), \Upsilon, \psi)
\]
we further have
\begin{equation*}%\label{justaformula}
    \gamma^{\text{Sh}}(s,\pi \times \tau, \psi) = \gamma(s,\pi  \times (\tau \otimes \Upsilon^{-1}), \Upsilon, \psi).
\end{equation*}
Since $\Upsilon(-1) = 1$ and $\Upsilon(\varpi) = -1$, the above equals $-\gamma(s,\pi \times \tau, \Upsilon, \psi)$. To see the other implication, we simply replace $\tau$ with $\tau \otimes \Upsilon$ in the above formula.
\end{proof}

\subsection{Another proof in the non-dyadic case}
In this section we will recover Theorem \ref{main1} in the non-dyadic case using a different method. Indeed, the standard base change endoscopic lift of a simple supercuspidal representation of the unramified unitary group $\UU_{2l}$ is a simple supercuspidal representation of $\GL_{2l}(E)$ (cf. \cite{Oi18}), and the gamma factor of the lift coincides with the original representation (cf. \cite{Mok15,KK05}). Finally, the gamma factor of a simple supercuspidal representation of a general linear group was computed using the Rankin-Selberg method (cf. \cite{AL14}). %(cf. \cite{GJ72}).

We now explain the above argument in details. First we note that the author used a different Hermitian matrix
$$H = H_{2l}=
\begin{pmatrix}
    &&&&1\\
    &&&-1&\\
    &&\iddots&&\\
    &1&&&\\
    -1&&&&
\end{pmatrix}$$
defining the unitary group in \cite{Oi18}. We start from making a dictionary between the parametrization of the simple supercuspidal representations in the two papers.
For clarification, we denote the simple supercuspidal representation parametrized by the pair $(\omega^1,b)$ in \cite{Oi18} as $\pi^\prime_{\omega^1,b} = \mathrm{cInd}\chi^\prime_{\omega^1,b}$, to be distinguished from our $\pi_{\omega^1,b}$.
We observe the two Hermitian matrices conjugate with each other by a toric element $\eta = \mathrm{diag}((-1)^0,(-1)^1,\ldots,(-1)^{l-1},1,1,\ldots,1)$ and consequently the two unitary groups are isomorphic via the conjugation of the element. The isomorphism preserves the chosen Iwohori subgroups and their filtration groups and hence it suffices to describe the character $\chi^\eta_{\omega^1,b}$ in terms of $\chi^\prime$.
Indeed, we have $$\chi^\eta_{\omega^1,b}(zy) = \omega^1([z])\psi(-y_{12}-\ldots-y_{l-1,l}+\frac{(-1)^{l-1}y_{l,l+1}}{2}+\frac{b\varpi^{-1}y_{2l,1}}{2}).$$
The character 
$\chi^\eta_{\omega^1,b}$ is equivalent to a character in the form that we used to parametrize the simple supercuspidal representations, i.e. for some $\alpha \in T$ there exists $\omega^\prime \in (k_E)^\vee$ and $b^\prime \in F^\times$ such that
\[
(\chi^\eta)^\alpha_{\omega^1,b}=
\chi^\prime_{\omega^\prime,b^\prime}.
\]
Let $\alpha = \mathrm{diag}(z_1,\ldots,z_l,\sigma(z_l)^{-1},\ldots,\sigma(z_1)^{-1})$. By solving the above equation for $\alpha$, we get $\omega^\prime = \omega^1$ and the relations
\[
\begin{cases}
(z_{m-1})^{-1}z_m = -1  \text{ for } m \neq l\\
N(z_1)^{-1} = (-1)^{l-1} \\
N(z_1)b = b^\prime
\end{cases}
\]
where $N=N_{E/F}$ is the norm map with respect to the field extension $E/F$.
Hence $b^\prime = (-1)^{l-1}b$. Putting the above together, $\pi^\prime = \pi^\prime_{\omega^1,(-1)^{l-1}}$ is the corresponding representation under the dictionary. 

Theorem 5.13 in \cite{Oi18} shows that the standard base change lift of the simple supercuspidal representation $\pi^\prime$ is $\Pi = {\pi^{\GL_{2l}(E)}_{\omega,(-1)^{l-1}b,-\omega^{1}(-1)}}$, where the parametrization of simple supercuspidal representations of the general linear group follows that in \cite{Oi18} and where $\omega \in (k_E^\times)^\vee$ is defined as $\omega(z) = \omega^1(z/\sigma(z))$. Shahidi's gamma factor is preserved by the standard base change lift (cf. \cite{Mok15}), i.e. 
\[
\gamma^\text{Sh}(s, \pi \times \tau, \psi) = \gamma(s, \Pi \times \tau, \psi),
\]
where the right hand side is computed in \cite{AL14}, Corollary 3.14 as
\[
\gamma(s,\Pi \times \tau, \psi) = -\omega^1(-1) \tau(-1)^l \tau(b^{-1}\varpi) q_F^{-2s+1}.
\]
Here we note that the lift $\Pi$ is denoted as $\sigma((-1)^{l-1}b^{-1}\varpi, - \omega^1(-1), \tilde\omega)$ in the parametrization in \textit{loc. cit.}, where $\tilde\omega \in (E^\times)^\vee$ is the character which induces $\omega$ on $k_E^\times$. Section A.2 in \cite{AHKO} provided a dictionary between the different parametrizations of the simple supercuspidal representations of the general linear group.

\section{An application}
With our computation of the gamma factor for $p=2$, we aim to provide new insight on the possible endoscopic lift of simple supercuspidal representations from $\UU_{2l}$ to $\GL_{2l}(E)$ and the structure of the $L$-packets of $\UU_{2l}$.
\begin{prop}
Let $p=2$, and let $\pi$ be a simple supercuspidal representation of $\UU_{2l}$. $\pi$ is the unique only element in its $L$-packet.
\end{prop}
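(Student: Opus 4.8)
The plan is to transport the statement to $\GL_{2l}(E)$ by base change and then read off triviality of the relevant component group from a depth computation, with Theorem~\ref{main1} controlling the cuspidal support of the lift.

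I would first reduce to a statement about the lift. By \cite{ACS} the representation $\pi$ admits a functorial base change lift $\Pi$, an irreducible admissible generic representation of $\GL_{2l}(E)$, with
\[
\gamma^{\Sh}(s,\pi\times\tau,\psi)=\gamma(s,\Pi\times\tau,\psi)
\]
for every tamely ramified character $\tau$ of $E^\times$; on the level of parameters this lift is the restriction $\varphi_\pi|_{W_E}$ (composed with the standard representation). Since $\pi$ is a generic supercuspidal representation, $\varphi_\pi$ is a discrete Langlands parameter of $\UU_{2l}$, and by the endoscopic classification of \cite{Mok15} the $L$-packet of $\pi$ is a singleton precisely when the component group $\mathcal{S}_{\varphi_\pi}$ is trivial, which for a discrete parameter of $\UU_{2l}$ is equivalent to $\varphi_\pi$ being irreducible, i.e.\ to $\Pi$ being a supercuspidal representation of $\GL_{2l}(E)$. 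So it suffices to prove that $\Pi$ is supercuspidal.

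I would then write $\Pi=\sigma_1\boxplus\cdots\boxplus\sigma_r$ as an isobaric sum of essentially square-integrable representations $\sigma_j$ of $\GL_{m_j}(E)$ with $\sum_j m_j=2l$ (possible because $\Pi$ is generic), and show $r=1$ using depth. The simple supercuspidal $\pi$ has depth $\frac{1}{2l}$, and it is standard that base change along the \emph{unramified} extension $E/F$ preserves depth (of representations and of parameters alike), so $\Pi$ also has depth $\frac{1}{2l}$; hence $\max_j \mathrm{depth}(\sigma_j)=\frac{1}{2l}$. On the other hand, every irreducible representation of $\GL_m(E)$ has depth either $0$ or at least $\frac{1}{m}$, so a constituent $\sigma_j$ of positive depth must satisfy $\frac{1}{m_j}\le \frac{1}{2l}$, forcing $m_j=2l$; and since $\mathrm{depth}(\Pi)>0$ at least one such $j$ exists, whence $r=1$. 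Finally a discrete series of $\GL_{2l}(E)$ of depth exactly $\frac{1}{2l}$ is supercuspidal, because a generalized Steinberg $\mathrm{St}_k(\rho)$ with $k\ge 2$ has the depth of $\rho$, which is $0$ or $>\frac{1}{2l}$; thus $\Pi$ is supercuspidal (indeed itself simple supercuspidal, consistently with Theorem~\ref{main1} and the non-dyadic computation). As an independent confirmation I would note, as in the discussion after Theorem~\ref{main1}, that the absence of poles of $\gamma^{\Sh}(s,\pi\times\tau,\psi)$ already excludes tamely ramified characters from the cuspidal support of $\Pi$: such a character $\mu$ would contribute a factor $L(s,\mu\tau)$ to $L(s,\Pi\times\tau)$, and choosing a tamely ramified $\tau$ with $\mu\tau$ unramified would then force a pole of $\gamma(s,\Pi\times\tau,\psi)$ that cannot cancel against a zero of another Rankin--Selberg factor (the Frobenius value of a conjugate-self-dual one-dimensional constituent is a root of unity), contradicting that the right-hand side of Theorem~\ref{main1} is a monomial in $q_F^{-s}$ for every tamely ramified $\tau$.

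The main obstacle, and the place where the dyadic hypothesis matters, is the input to the depth step: one must know that the functorial lift of \cite{ACS} agrees with the base change of the parameter used in \cite{Mok15} and that it preserves depth, since the explicit construction of the lift in \cite{Oi18} is unavailable when $p=2$. Granting the standard compatibility of base change with depth along unramified extensions --- which Theorem~\ref{main1} is fully consistent with --- the argument above goes through; I expect checking this compatibility carefully to be essentially the only work beyond invoking the classification results.
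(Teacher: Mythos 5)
Your approach is genuinely different from the paper's, and it has a gap that is fatal in the dyadic setting the proposition is actually concerned with.

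The paper's argument is elementary and combinatorial: (a) the total number of simple supercuspidal representations of $\UU_{2l}$ is $q_F^2-1$, which is odd when $p=2$, whence (under the implicit hypothesis that these partition into $L$-packets of a common size) $|\Pi_\phi|$ is odd; (b) there are exactly two $T$-orbits of generic characters, every simple supercuspidal is generic with respect to one of them, and by \cite{Ato16} an $L$-packet contains at most one $\psi$-generic and at most one $\psi'$-generic member, so $|\Pi_\phi|\le 2$; combining gives $|\Pi_\phi|=1$. Neither step touches base change, depth, or the parameter $\phi$ itself.

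Your proof instead reduces to showing that the base change lift $\Pi$ is supercuspidal (equivalently $\varphi_\pi$ irreducible), and the decisive input is depth preservation of base change along the unramified $E/F$. This is precisely the thing that is \emph{not} available here. The paper explicitly states that the explicit endoscopic lift construction of \cite{Oi18} is invalid when $p=2$, and the point of the gamma-factor computation is to work \emph{towards} establishing properties of the lift (Swan conductor, irreducibility) that you are taking as known. What you call a ``standard'' compatibility is proved in the literature for unitary groups only via constructions that require $p\neq 2$; invoking it for $p=2$ is assuming what one wants to prove. Indeed, directly after this proposition the paper says that irreducibility of $\varphi_\pi$ is only ``expected'' and ``may be achievable using the formal degree conjecture'' --- it is not claimed, and the proposition is proved by the counting argument precisely so as not to need it. Your depth argument, if the preservation input were available, would prove that stronger irreducibility statement, from which singleton $L$-packets follow; but as written the proof is conditional on an open problem in this regime and does not furnish an independent proof of the proposition. (A minor wording point as well: for a discrete parameter of $\UU_{2l}$, triviality of $\mathcal{S}_{\varphi_\pi}$ is equivalent to $\varphi_\pi$ having a single summand, which allows a nontrivial $\mathrm{SL}_2$ factor; ``irreducible, i.e.\ $\Pi$ supercuspidal'' is the stronger statement you then obtain from the depth argument, so the logic is fine, but the equivalence as stated is not quite right.)
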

\begin{proof}
Let $\psi$ be a fixed generic character of $E$. From \cite{Ato16}, we know that in each $L$-packet $\Pi_\phi$ of $\UU_{2l}$, for a Langlands parameter $\phi$, there exists a representation $\pi_\psi$ that is $\psi$-generic.

Let $\phi$ be the corresponding Langlands parameter of $\pi_\psi$, and let $S_\phi$ denote the cardinality of the $L$-packet containing $\pi_\psi$. Since there are a total of $q^2-1$ simple supercuspidal representations, an odd number when $p=2$, we know that $S_\phi$ is odd.

let $T$ be a maximal torus of $\u_{2l}$. Then there are $F^\times/N_{E/F}(E^\times) = 2$ total $T$-orbits of generic characters of $\UU_{2l}$. As there are two orbits of generic characters, we will call $\psi'$ the other representative of a generic character for $\UU_{2l}$.

Furthermore, note that any simple supercuspidal representation is generic with respect to some generic character, in our case, $\psi$ or $\psi'$.

$S_\phi$ is odd, and if it were greater than one, then we would have at least three simple supercuspidal representations, $\pi_1=\pi_\psi$, $\pi_2$, and $\pi_3$ belonging to the same $L$-packet. Without loss of generality, $\pi_1$ is $\psi$-generic, while $\pi_2$ is $\psi'$-generic. But $\pi_3$ has to be generic with respect to one of these characters. Since there is only one element in each $L$-packet that is $\psi$-generic (resp. $\psi'$-generic) from \cite{Ato16}, we arrive at a contradiction, thus showing that $S_\phi = 1$.
\end{proof}

In understanding the structure of the Langlands parameters for simple supercuspidal representations of $\UU_{2l}$, we suspect the course of action is to adapt Section 6.2 and 6.3 of \cite{AHKO} to this setting. If we wish to conclude that endoscopic lifts for $p=2$ are consistent with those of Oi (cf. \cite{Oi18}), we should show that $\phi$ the parameter corresponding to $\pi_{\omega^1,1}$ is irreducible and that the Swan conductor of the $\phi$ is $1$.

Because we have no poles of the gamma factor $\gamma^{\text{Sh}}(s,\pi \times \tau, \psi)$ at $s=1$, we expect $\phi$ to be irreducible. Our claim for irreducibility may be achievable using the formal degree conjecture for $\u_{2l}$ (cf., Section 6.3 \cite{AHKO}), which was recently proven by Beuzart-Plessis \cite{BP21}. 

Following the equation for the Swan conductor in  Corollary 6.6 of \cite{AHKO}, and assuming this equation is still valid for quasi-split groups, we find that in our case of $p \neq 2$ as well as in the case of $p=2$, we have 
\[
\vert \epsilon(s,\phi,\psi) \vert = \vert \gamma(s,\phi,\psi) \vert = q_E^{\text{Swan}(\phi)(\frac{1}{2}-s)}
\]
and
\[
\vert \gamma(s,\phi,\psi) \vert = \left\vert 
-\omega^1(-1) \tau(-1)^{l} \tau(\varpi) q_F^{-2s+1} \right\vert = q_E^{\frac{1}{2}-s}, 
\]
we would conclude that the $\text{Swan}(\phi) = 1$. This would thus prove the lift of $\pi$ is to a simple supercuspidal of $\GL_{2l}(E)$. And since $S_\phi = 1$, we would further know that no two simple supercuspidal representations lift to the same representation of $\GL_{2l}(E)$.

% %Ref
% \bibliographystyle{alpha} % We choose the "alpha" reference style
% \bibliography{refs} % Entries are in the refs.bib file

%Ref

\end{document}